\newcommand{\bs}{\boldsymbol}
\newlength\colwidth
\newlength\secpad
\newlength\rowheight
\newcolumntype{f}{>{\centering\arraybackslash}m{\colwidth}@{\hskip 0.3em}}
\newcommand{\triplenorm}[1]{\ensuremath{|\!|\!| #1 |\!|\!|}}
\crefname{hypothesis}{Hypothesis}{Hypotheses}
\title{A Finite-Element Framework for a Mimetic Finite-Difference Discretization of Maxwell's Equations\thanks{Submitted to the editors November ??, 2020.
}}
\author{James H. Adler\thanks{Department of Mathematics, Tufts University, Medford, MA 02155
  (\email{james.adler@tufts.edu}, \email{casey.cavanaugh@tufts.edu}, \email{xiaozhe.hu@tufts.edu}).} \and Casey Cavanaugh\footnotemark[2] \and Xiaozhe Hu\footnotemark[2]
\and Ludmil T. Zikatanov \thanks{Department of Mathematics, Pennsylvania State University, University Park, PA 16802
  \hbox{(\email{ludmil@psu.edu}).}}}
\begin{document}

\maketitle

\begin{abstract}
   Maxwell's equations are a system of partial differential equations
   that govern the laws of electromagnetic induction.  We study a mimetic finite-difference (MFD) discretization of the equations which preserves important underlying physical properties. We show that,
    after mass-lumping and appropriate scaling, the MFD discretization is equivalent to a structure-preserving finite-element (FE) scheme.  
   This allows for a transparent analysis of the MFD method using the FE framework, and provides an avenue for the construction of efficient and robust linear solvers for the discretized system.  In particular, block preconditioners designed for FE formulations can be applied to the MFD system in a straightforward fashion.  We present numerical tests which verify the accuracy of the MFD scheme and confirm the robustness of the  preconditioners.
\end{abstract}

\begin{keywords}
  Maxwell's equations, finite-element method, mimetic finite-difference method, structure-preserving block preconditioners
\end{keywords}

\begin{AMS}
35Q61, 65M60, 65M06, 65F08, 65Z05
\end{AMS}

\section{Introduction}\label{sec:intro}

We consider the numerical solution of Maxwell's equations in a bounded connected domain $\Omega \in \mathbb{R}^3$:
\begin{align}
\frac{\partial \bm{B}}{\partial t} + \nabla \times \bm{E} &= \bm{0}, \quad \text{in} \ \Omega \times (0,T], \label{eq:FaradayFull} \\ 
\epsilon \frac{\partial \bm{E}}{\partial t} - \nabla \times \mu^{-1} \bm{B} &= -\bm{j}, \, \text{in} \ \Omega \times (0,T], \label{eq:AmpereFull} \\
\nabla \cdot \epsilon \bm{E} &= 0,\quad \text{in} \ \Omega\times (0,T], \label{eq:GaussEFull} \\
\nabla \cdot \bm{B} &= 0,\quad \text{in} \ \Omega\times (0,T], \label{eq:GaussBFull}
\end{align}
Here, $\bm{B}(\bm{x},t)$ and $\bm{E}(\bm{x},t)$ are the unknown magnetic and electric fields, $\epsilon(\bm{x})$ and $\mu(\bm{x})$ are the permittivity and permeability of the medium, respectively, and $\bm{j}(\bm{x},t)$ is the current density satisfying $\nabla \cdot \bm{j} = 0$. For simplicity, we choose $\epsilon = \mu = 1$, and impose homogeneous essential (Dirichlet) boundary conditions which model a perfect conductor:
\begin{equation}
\bm{B} \cdot \bm{n} \big|_{\partial \Omega} = 0, \quad \bm{n} \times \bm{E} \big|_{\partial \Omega} = \bm{0}. \label{eq:BCs}
\end{equation}
More general cases can be handled with straightforward modifications. In particular, the analysis presented in this paper still holds with non-constant $\epsilon(\bm{x})$ and $\mu(\bm{x})$, for example, by using a piecewise constant approximation for the parameters. 

The coupled equations (\ref{eq:FaradayFull})--(\ref{eq:AmpereFull}), Faraday's and Ampere's laws, model the interaction of the electric and magnetic fields, while the Gauss laws (\ref{eq:GaussEFull})--(\ref{eq:GaussBFull}), model the flux constraints of the individual fields. One of the major difficulties in numerically solving the Maxwell system is related to the constraints (\ref{eq:GaussEFull})  and (\ref{eq:GaussBFull}) as it is often necessary, by physical or other considerations, to have analogues of such identities on the discrete level.

To rectify this, structure-preserving discretizations are used to
guarantee that conservation laws on the continuous level still hold at
the discrete
level. While there are a  variety of such numerical methods, we focus in this paper on the relationship between mimetic finite-differences (MFD) and a structure-preserving mixed finite-element method (FEM) derived via finite-element exterior calculus (FEEC). 

The MFD method is defined by operators designed to ``mimic" the continuous level operators \cite{MFDBook,MFDart2,MFDart3,MFD_art}. This technique is straightforward to derive and, by drawing similarities to the continuous operators, it is quite easy to see that the discrete operators do in fact obey the continuous level properties. MFD is simple to implement, can be applied directly to the strong form of the PDE system, and has few mesh requirements (i.e., general polyhedral grids can be used). Like most finite-difference methods, however, the error estimates and convergence theory require high regularity; well-posedness is difficult to prove; and it is not always clear how to optimally solve the resulting linear system. 

On the other hand, structure-preserving finite-element methods for Maxwell's equations have been widely studied (eg. \cite{jxsolver,jxnumapprox,MonkFE, block2}). Using FEEC ~\cite{FEextcalc,FEextcalc2}, one can show that de Rham exact sequences are guaranteed on the discrete level. Furthermore, in the FEEC setting, the convergence theory and a priori error estimates are well-known, and showing well-posedness of the discretized weak form follows directly from Babu\v{s}ka--Brezzi theory~\cite{babuska-inf-sup, brezzi-inf-sup}. Moreover, the construction of efficient solvers is also well-developed~\cite{precond2, precond1, precond3, precond4, precond5, block1, block2}.

An important question that arises when using such discretizations for PDEs is how to efficiently solve the resulting linear system. One widely used method for solving Maxwell's system discretized by FEM is based on block preconditioners for Krylov methods, constructed using Schur complements.  More generally, block preconditioners for saddle point systems, a class that the full Maxwell system falls into, are widely studied \cite{precond2, precond1, precond3, precond4, precond5}, and robustness and efficiency results are well-established.  In general, to ensure that the iterative solver does not destroy the properties of the discretization, preconditioners must be developed that also preserve the operator properties at each time step. This is essential for ensuring that the resulting numerical solution obeys the PDE constraints throughout both the spatial and time domains.  Such preconditioners have been developed for the full Maxwell system, (\ref{eq:FaradayFull})--(\ref{eq:GaussBFull}), using mixed FEM discretizations \cite{jxsolver, block2}, as well as for the simplified time-harmonic form of Maxwell's equations \cite{timeharm2, timeharm1}. Another variety of electromagnetic applications comes in the form of magnetohydrodynamics, where a similar block-preconditioning approach can be used for a FEM divergence-free preserving discretization \cite{block1}. 

In this work, we adapt the preconditioners developed in \cite{jxsolver} for the Maxwell system with impedance boundary conditions.  As in \cite{jxsolver, jxnumapprox}, we consider a variation of (\ref{eq:FaradayFull})--(\ref{eq:GaussBFull}), where an auxiliary pressure variable, $p(\bm{x},t)$, is introduced:
\begin{align}
\frac{\partial \bm{B}}{\partial t} + \nabla \times \bm{E} & = \bm{0}, \quad \text{in} \ \Omega \times (0,T], \label{eq:Faraday} \\
\frac{\partial \bm{E}}{\partial t} - \nabla \times \bm{B} + \nabla p &= \bm{-j}, \, \text{in} \ \Omega \times (0,T], \label{eq:Ampere} \\
\frac{\partial p}{\partial t} + \nabla \cdot \bm{E} &= 0, \quad \text{in} \ \Omega \times (0,T]. \label{eq:Gauss}
\end{align}
It is straightforward to show that with suitable initial conditions, $p(\bm{x},0) = 0$ and \newline $\nabla \cdot \bm{B}(\bm{x},0) =0$, (\ref{eq:FaradayFull})--(\ref{eq:GaussBFull}) is equivalent to (\ref{eq:Faraday})--(\ref{eq:Gauss}). A structure preserving discretization is essential to allowing this form to be solved in place of the full Maxwell system. 

Ideally, we would like to have the ease and simplicity of the MFD method with all of the well-posedness and preconditioning theory that supports the FEM.  Similar to \cite{BokilMFD,FeMFD}, in this paper, we apply the MFD method to the Maxwell system, and then analyze it in a FE framework. By using mass-lumping schemes and scaled basis functions in the FEM, we show that the two methods yield equivalent linear systems. This equivalence is used to apply the FE theory to the MFD system to show well-posedness of the mimetic discretization.  Additionally, since robust block preconditioners have been designed for the Maxwell FE system that preserve the constraints at all time iterations \cite{jxsolver}, we demonstrate how to slightly modify those results to obtain robust linear solvers for the MFD system of Maxwell's equations.

This paper is organized as follows. In Section 2, we introduce the notation and discretization technique for MFD and apply it to the Maxwell system. Section 3 recalls the FE discretization and presents a mass-lumped alternative. Then, Section 4 draws connections between the two methods for Maxwell, shows their equivalence, and well-posedness of the MFD system is proven. Section 5 introduces and analyzes block preconditioners for the MFD system, proving their robustness.  Finally, Section 6 presents numerical results to demonstrate the theoretical results, and concluding remarks and future work are discussed in Section 7.

\section{The Mimetic Finite-Difference Method}\label{sec:discretizations}

Following \cite{MFD_art}, we construct a primal (Delaunay) tetrahedral mesh and a dual (Voronoi) polyhedral grid. Denote the vertices/nodes of the Delaunay triangulation by $\{ \bm{x}_i^D \}_{i=1}^{N_D}$, and an edge on the Delaunay mesh connecting nodes $\bm{x }_i^D$ and $\bm{x}_j^D$ by $\bm{e}_{ij}^D$, with unit tangent vector $\bm{t}_{ij}^D$ pointing from vertex of lower index to vertex of higher index. The Delaunay tetrahedra are given by $D_k, \, k = 1,...,N_V$. Each  $D_k$ has face set (boundary) $\partial D_k$. The neighbor set of tetrahedron $D_k$, given by $\mathcal{N}_k^D$, is defined as the set of indices of the tetrahedra that share common planes with $D_k$, i.e., $\mathcal{N}_k^D := \{ m \, \big| \, \partial D_k \cap \partial D_m \neq \emptyset, \, m = 1,...,N_V \}$.
The common plane (face) between $D_k$ and $D_m$ is given by $\partial D_{km}$. For tetrahedron $D_k$ with face $\partial D_{km}$, we define the unit outward normal vector $\bm{n}_{km}^D$ pointing outward from $D_k$.
Analogous to the above definitions, we have for the dual Voronoi mesh, nodes, $\{\bm{x}_k^V\}_{k=1}^{N_V}$, edges, $\bm{e}_{km}^V$, with unit tangent vector $\bm{t}_{km}^V$, polyhedra, $V_i$, $i = 1,...,N_D$, face set, $\partial V_i$, neighbor set, $\mathcal{N}_i^V$, common plane $\partial V_{ij}$, and outward unit normal vector $\bm{n}_{ij}^V$.

This dual mesh configuration yields some useful properties that are exploited when defining the MFD operators. The Voronoi point $\bm{x}_k^V$ is the circumcenter of the Delaunay tetrahedron $D_k$.  Additionally, the Delaunay nodes define the Voronoi polyhedra. We define $V_i$ as the set of points in the domain that lie closer to Delaunay node $\bm{x}_i^D$ than any other Delaunay node, 
\[V_i := \{ \bm{x} \in \Omega \, \big| \, |\bm{x} - \bm{x}_i^D | \leq | \bm{x} - \bm{x}_j^D|, \, j = 1,...,N_D, \, j\neq i \}.\]
Furthermore, we have that each Delaunay edge $\bm{e}_{ij}^D$ is orthogonal to the Voronoi face $\partial V_{ij}$, and each Voronoi edge $\bm{e}_{km}^V$ is orthogonal to Delaunay face, $\partial D_{km}$. This gives us a one-to-one correspondence between nodes on one mesh to polyhedra on the other, and edges on one mesh to faces on the other. Figure \ref{fig:dualmeshes} illustrates an example mesh in two dimensions to further highlight the notation used.
Note that this dual mesh configuration with these properties requires that the circumcenters of the Delaunay triangulation lie in the interior of the Delaunay tetrahedra. While this is not a strict requirement for the MFD method to work, it allows for simplicity in the analysis and implementation (see \cite{MLBrezzi,MFD_art}). 
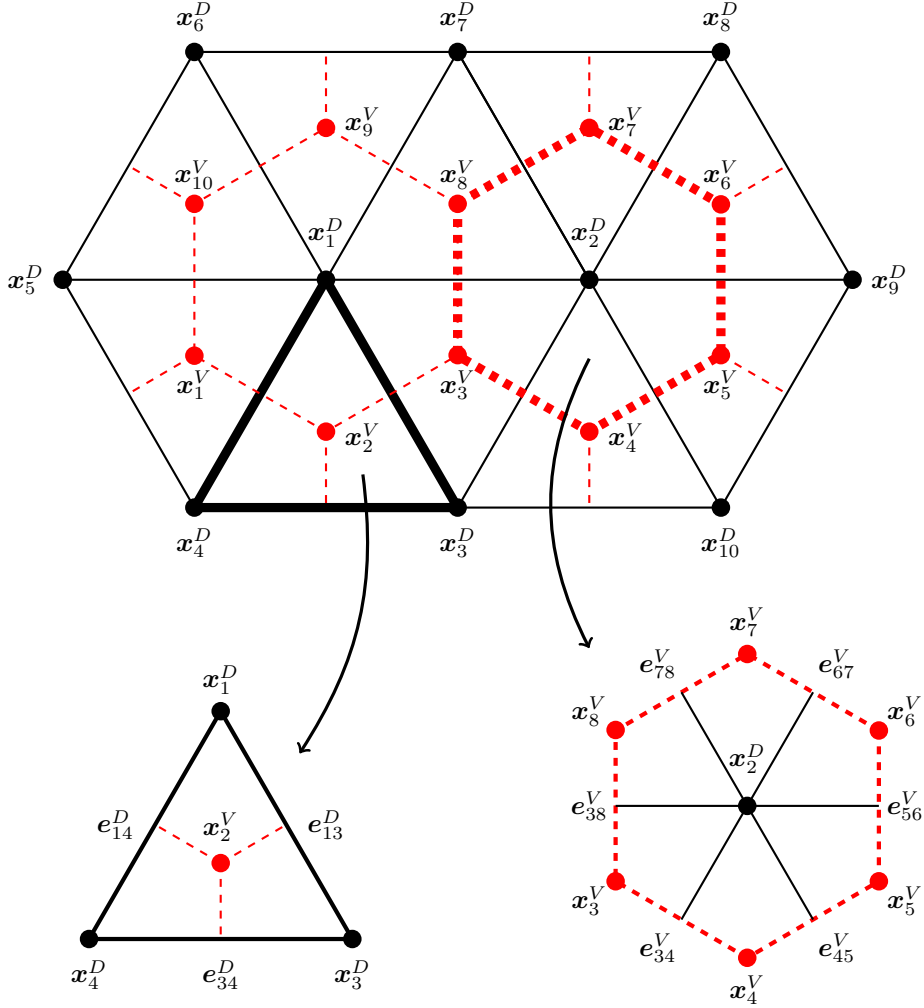
\begin{figure}[!htb] 
  \centering
     \begin{tikzpicture}[scale = 7, every node/.style={scale = 1}]
\draw[thick] (.5,0) to (.25, .433) to (-.25, .433) to (-.5, 0) to (-.25, -.433) to (.25, -.433) to cycle;
\draw[thick] (.25, .433) to (-.25, -.433);
\draw[thick] (-.25, .433) to (.25, -.433);
\draw[thick] (.5,0) to (-.5, 0);
\draw[thick] (.25, .433) to (.75, .433) to (1,0) to (.75, -.433) to (.5, 0) to cycle;
\draw[thick] (.75, -.433) to (.25, -.433);
\draw[thick] (.5,0) to (1,0);
\draw[thick] (.5,0) to (.75, .433);

\draw[line width=3.5] (0,0) to (.25, -.433) to (-.25, -.433) to cycle;

\draw[dashed, red, thick] (.25, .1443) to (.25, -.1433) to (0, -.2887) to (-.25, -.1443) to (-.25, .1443) to (0, .2887) to cycle;
\draw[dashed, red , thick] (0, .2887) to (0, .433);
\draw[dashed, red, thick] (0, -.2887) to (0, -.433);
\draw[dashed, red, thick] (-.25, .1433) to (-0.375, 0.2165);
\draw[dashed, red, thick] (.25, .1433) to (0.5, .2887);
\draw[dashed, red, thick] (-.25, -.1433) to (-0.375, -0.2165);
\draw[dashed, red, thick](.5, .2887) to (.5, .433);
\draw[dashed, red, thick](.5, -.2887) to (.5, -.433);
\draw[dashed, red, thick] (.75, .1433) to (.875  , 0.2165);
\draw[dashed, red, thick] (.75, -.1433) to (.875  , -0.2165);

\draw[dashed, red, line width=3.5] (.25, .1443) to (0.5, .2887) to  (.75, .1433) to (.75, -.1433) to (0.5, -.2887) to (.25, -.1443) to cycle;

\fill (0,0) circle (.5pt);
\fill (.5,0) circle (.5pt);
\fill (.25, .433) circle (.5pt);
\fill (-.25, .433) circle (.5pt);
\fill (-.5, 0) circle (.5pt);
\fill (-.25, -.433) circle (.5pt);
\fill (.25, -.433) circle (.5pt);

\fill (.75, .433) circle (.5pt);
\fill (.75, -.433) circle (.5pt);
\fill (1,0) circle (.5pt);

\fill[red] (.25, .1443) circle (.5pt);
\fill[red] (.25, -.1433) circle (.5pt);
\fill[red] (0, -.2887) circle (.5pt);
\fill[red] (-.25, -.1443) circle (.5pt);
\fill[red] (-.25, .1443) circle (.5pt);
\fill[red] (0, .2887) circle (.5pt);

\fill[red] (.75, .1433) circle (.5pt);
\fill[red] (.75, -.1433) circle (.5pt);
\fill[red] (.5, -0.2887) circle (.5pt);
\fill[red] (0.5, .2887) circle (.5pt);

\node at (0, .09) {$\bm{x}_1^D$};
\node at (.5, .09) {$\bm{x}_2^D$};
\node at (-.57, 0) {$\bm{x}_5^D$};
\node at (.25, .5){$\bm{x}_7^D$};
\node at (-.25, .5){$\bm{x}_6^D$};
\node at (-.25, - .5){$\bm{x}_4^D$};
\node at (.25, -.5){$\bm{x}_3^D$};
\node at(.75, .5){$\bm{x}_8^D$};
\node at (.75, -.5) {$\bm{x}_{10}^D$};
\node at (1.07, 0) {$\bm{x}_9^D$};

\node at (.25, -.2) {$\bm{x}_3^V$};
\node at ( .07, -.3){$\bm{x}_2^V$};
\node at (-.25, -.2) {$\bm{x}_1^V$};
\node at (-.25, .2) {$\bm{x}_{10}^V$};
\node at ( .07, .3){$\bm{x}_9^V$};
\node at (.25, .2) {$\bm{x}_8^V$};
\node at (.75, -.2){$\bm{x}_{5}^V$};
\node at (.75, .2) {$\bm{x}_6^V$};
\node at ( .57, -.3){$\bm{x}_{4}^V$};
\node at ( .57, .3){$\bm{x}_{7}^V$};

\begin{scope}[yshift = -.82cm, xshift = -.2cm]
[scale = 7, every node/.style={scale = 1}]
\draw[ultra thick] (0,0) to (-.25, -.433) to (.25, -.433) to cycle;
\draw[dashed, red, thick] (0, -.2887) to (0, -.433);
\draw[dashed, red, thick] (0, -.2887) to (.125, -.2155);
\draw[dashed, red, thick] (0, -.2887) to (-.125, -.2155);

\fill (0,0) circle (.5pt);
\fill (-.25, -.433) circle (.5pt);
\fill (.25, -.433) circle (.5pt);
\fill[red] (0, -.2887) circle (.5pt);

\node at (0, .06) {$\bm{x}_1^D$};
\node at (-.25, - .5){$\bm{x}_4^D$};
\node at (.25, -.5){$\bm{x}_3^D$};
\node at (-.2, -.2155){$\bm{e}_{14}^D$};
\node at (.2, -.2155){$\bm{e}_{13}^D$};
\node at (0, -.5) {$\bm{e}_{34}^D$};
\node at ( .0, -.22){$\bm{x}_2^V$};

\end{scope}

\draw[->, very thick](.07, -.37) to[bend left = 20] (-.05, -.9);

\begin{scope}[yshift = -1cm, xshift = .3cm]
\draw[dashed, red, ultra thick] (.75, .1433) to (.75, -.1433) to (.5, -.2887) to (.25, -.1433) to (.25, .1433) to (.5, .2887) to cycle;
\draw[thick] (.5,0) to (.625, .216); 
\draw[thick] (.5,0) to (.625, -.216);
\draw[thick] (.5,0) to (.375, -.216);
\draw[thick] (.5,0) to (.375, .216);
\draw[thick] (.5,0) to (.75,0);
\draw[thick] (.5, 0) to (.25,0);

\fill (.5,0) circle (.5pt);
\fill[red] (.25, .1443) circle (.5pt);
\fill[red] (.25, -.1433) circle (.5pt);
\fill[red] (.75, .1433) circle (.5pt);
\fill[red] (.75, -.1433) circle (.5pt);
\fill[red] (.5, -0.2887) circle (.5pt);
\fill[red] (0.5, .2887) circle (.5pt);

\node at (.5, .09) {$\bm{x}_2^D$};
\node at (.2, -.18) {$\bm{x}_3^V$};
\node at (.2, .18) {$\bm{x}_8^V$};
\node at (.8, -.18){$\bm{x}_{5}^V$};
\node at (.8, .18) {$\bm{x}_6^V$};
\node at ( .5, -.35){$\bm{x}_{4}^V$};
\node at ( .5, .35){$\bm{x}_{7}^V$};
\node at (.33, .27) {$\bm{e}_{78}^V$};
\node at (.33, -.27) {$\bm{e}_{34}^V$};
\node at (.67, .27) {$\bm{e}_{67}^V$};
\node at (.67, -.27) {$\bm{e}_{45}^V$};
\node at (.2, 0) {$\bm{e}_{38}^V$};
\node at (.8, 0) {$\bm{e}_{56}^V$};

\end{scope}
\draw[->, very thick](.50, -.15) to[bend right = 27] (.5, -.7);

\end{tikzpicture}  

     \captionsetup[figure]{labelfont={it}}
  \caption{ Top: Two-dimensional primal Delaunay mesh in black solid lines with corresponding dual Voronoi mesh in red dashed lines. Bottom Left: Zoom in of Delaunay element $D_2$, corresponding to Voronoi node $\bm{x}_2^V$, with labeled Delaunay edges and nodes. Bottom Right: Zoom in of Voronoi element $V_2$, corresponding to Delaunay node $\bm{x}_2^D$, with labeled Voronoi nodes and edges.} \label{fig:dualmeshes}
\end{figure}

\subsection{Grid Functions and MFD Operators}
Following~\cite{MFDart2,MFD_art}, we define functions and operators on both the Delaunay and Voronoi meshes. First, approximations of scalar functions in the domain are represented with scalar grid functions that are defined on the nodes of the meshes. Thus, scalar functions defined on the Delaunay nodes are constants on Voronoi polyhedra, and scalar functions on Voronoi nodes are constants on Delaunay tetrahedra. The corresponding function spaces are as follows,
\begin{align}
H_D &:= \{ u(\bm{x}) \, \big| \, u(\bm{x}) = u(\bm{x}_i^D) = u_i^D, \,\, \forall \bm{x} \in V_i, \, i = 1,...,N_D \}, \label{eq:scalarD}  \\
H_V &:= \{ u(\bm{x}) \, \big| \, u(\bm{x}) = u(\bm{x}_k^V) = u_k^V, \,\, \forall \bm{x} \in D_k, \, k = 1,...,N_V \}. \label{eq:scalarV}
\end{align}
Vector functions are approximated on the Delaunay mesh with vector grid functions, where the function space is denoted by $\bm{H_D}$. For vector function $\bm{u}(\bm{x})$, we project it along the Delaunay edge, and evaluate at the intersection of the Delaunay edges and Voronoi face.  The space of vector grid functions on the Voronoi mesh, $\bm{H_V}$, is defined analogously on the Voronoi mesh,
\begin{align}
\bm{H_D} &:= \{ \bm{u} (\bm{x}) \, \big| \, \bm{u} (\bm{x}) = \bm{u} \cdot \bm{t}_{ij}^D (\bm{x}_{ij}^D) = u_{ij}^D, \,\, \bm{x}_{ij}^D = \bm{e}_{ij}^D \cap \partial V_{ij} \}, \\
\bm{H_V} &:= \{ \bm{u} (\bm{x}) \, \big| \, \bm{u} (\bm{x}) = \bm{u} \cdot \bm{t}_{km}^V (\bm{x}_{km}^V) = u_{km}^V, \,\, \bm{x}_{km}^V = \bm{e}_{km}^V \cap \partial D_{km} \}.
\end{align}

To build intuition, we first introduce the MFD operators component-wise, then later give the matrix definitions. 
In the continuous setting, the gradient maps scalar functions to vector functions. Analogously, the discrete gradient on the Delaunay mesh maps a scalar grid function defined on the nodes to a vector grid function defined on the edges, or $\text{grad}_D u : H_D \to \bs{H_D}$. On edge $\bm{e}_{ij}^D$,  
\begin{equation*}
(\text{grad}_D u )_{ij}^D = \frac{u_j^D - u_i^D}{|\bm{e}_{ij}^D|} \,\, \eta(i,j),
\end{equation*}
where $\eta$ is an orientation constant,
\begin{equation*}
\eta (i,j) = \begin{cases} \,\,\,\,1, & j>i \\
						 -1, & \text{otherwise.}
			\end{cases}
\end{equation*}			  
Similarly, the gradient on the Voronoi mesh, $\text{grad}_V u : H_V \to \bs{H_V}$, is given by,
\begin{equation*}
(\text{grad}_V u )_{km}^V = \frac{u_m^V - u_k^V}{|\bm{e}_{km}^V|} \,\, \eta(k,m).
\end{equation*}
To define the discrete divergence, first note that divergence maps vector functions to scalar functions. This differential operator on the Delaunay grid, $\text{div}_D u : \bs{H_D} \to H_D$, corresponding to the outward flux of $V_i$ is defined as
\begin{equation*}
(\text{div}_D u)_i^D = \frac{1}{|V_i|} \sum_{j \in \mathcal{N}_i^V} |\partial V_{ij}| u_{ij}^D (\bs n_{ij}^V \cdot \bs t_{ij}^D).
\end{equation*}
\noindent
Similarly, on the Voronoi grid, the divergence $\text{div}_V u: \bs{H_V} \to H_V$ is
\begin{equation*}
(\text{div}_V u)_k^V = \frac{1}{|D_k|} \sum_{m \in \mathcal{N}_k^D} |\partial D_{km}| u_{km}^V (\bs n_{km}^D \cdot \bs t_{km}^V).
\end{equation*}
The discrete curl operator maps from edges on one mesh (the circulation) to edges on the other mesh (the axis of rotation) by the geometric relationships between the dual meshes. Therefore, the Delaunay $\operatorname{curl}$ operator maps vector grid functions on the Delaunay mesh to a vector grid function on the Voronoi mesh, $\text{curl}_D  u: \bs{H_D} \to \bs{H_V}$ and is given by, 

\begin{equation*}
(\text{curl}_D  u)_{km}^V = \frac{(\bs t_{km}^V \cdot \bs n_{km}^D)}{|\partial D_{km}|} \sum_{\bm{e}_{ij}^D \in \partial D_{km}} u_{ij}^D \,\, |\bm{e}_{ij}^D| \,\, \chi(\bs n_{km}^D, \bs t_{ij}^D),
\end{equation*}
where the constant $\chi$ essentially enforces the right-hand rule,
\begin{equation*}
\chi(\bs n_{km}^D, \bs t_{ij}^D) = \begin{cases}
\,\,\,\,1, & \bs t_{ij}^D \text{ positively oriented,} \\
-1, & \text{otherwise.}
\end{cases}
\end{equation*}
Similarly, we define the Voronoi curl operator $\text{curl}_V  u: \bs{H_V} \to \bs{H_D}$,
\begin{equation*}
(\text{curl}_V  u)_{ij}^D = \frac{(\bs t_{ij}^D \cdot \bs n_{ij}^V)}{|\partial V_{ij}|} \sum_{\bm{e}_{km}^V \in \partial V_{ij}} u_{km}^V \,\, |\bm{e}_{km}^V| \,\, \chi(\bs n_{ij}^V, \bs t_{km}^V).
\end{equation*}

To define the MFD operators in matrix form, we introduce the edge-vertex signed incidence matrix, $\mathcal{G} \in \mathbb{R}^{M_D \times N_D},$ and the face-edge signed incidence matrix, $\mathcal{K} \in \mathbb{R}^{M_V \times M_D}$. Both are defined on the Delaunay triangulation where $N_D$, $M_V$ and $M_D$ denote the number of Delaunay nodes, Voronoi edges, and Delaunay edges, respectively. Similarly, on the Voronoi mesh, we have the signed incidence matrix $\mathcal{G}_V \in \mathbb{R}^{M_V \times N_V}$, where $N_V$ denotes the number of nodes on the Voronoi mesh. The nonzero entries of $\mathcal{G}$, $\mathcal{G}_V$, and $\mathcal{K}$ are either $1$ or $-1$, and the signs are consistent with the pre-determined orientation of the edges and faces. Additionally, we introduce the following diagonal matrices encoding the mesh information pertaining to MFD, \\
\begin{center}
\begin{tabular}{ccc}
$\mathcal{D}_{\bm{e}^D} = \text{diag} \left(|\bm{e}_{ij}^D| \right)$, &  
$\mathcal{D}_{\partial D} = \text{diag} \left(|\partial D_{km}| \right)$, & $\mathcal{D}_{D} = \text{diag} \left(|D_k| \right)$, 
\\ \\
$\mathcal{D}_{\bm{e}^V} = \text{diag} \left(|\bm{e}_{km}^V| \right)$,
& $\mathcal{D}_{\partial V} = \text{diag} \left(|\partial V_{ij}| \right)$, 
&
$\mathcal{D}_{V} = \text{diag} \left(|V_i| \right)$. \\ \\
\end{tabular}
\end{center}
The matrix representations are derived from the component-wise definitions using the incidence matrices as the actions of the operators, and the diagonal matrices for the appropriate scaling.  Thus,
\begin{align}
 \label{eq:graddivcurlD}
 \text{grad}_D &:= \mathcal{D}_{\bm{e}^D}^{-1} \mathcal{G},& 
 \text{div}_D &:= \mathcal{D}_V^{-1} \mathcal{G}^T \mathcal{D}_{\partial V},& \text{curl}_D &:= \mathcal{D}_{\partial D}^{-1} \mathcal{K} \mathcal{D}_{\bm{e}^D},\\
\label{eq:graddivcurlV}
\text{grad}_V &:= \mathcal{D}_{\bm{e}^V}^{-1} \mathcal{G}_V, &
\text{div}_V &:= \mathcal{D}_D^{-1} \mathcal{G}_V^T \mathcal{D}_{\partial D}, &
\text{curl}_V &:= \mathcal{D}_{\partial V}^{-1} \mathcal{K}^T \mathcal{D}_{\bm{e}^V}.
\end{align}

With this construction, it is known that the mimetic operators are structure-preserving, i.e., $\text{curl}_D \text{grad}_D = 0$, $\text{curl}_V \text{grad}_V = 0$, $\text{div}_V \text{curl}_D = 0$, and $\text{div}_D \text{curl}_V = 0$ \cite{MFDBook,MFD_art}. Using these relationships, two exact sequences exist for MFD,
\begin{align}
H_D &\xrightarrow{\text{grad}_D} \bm{H_D} \xrightarrow{\text{curl}_D} \bm{H_V} \xrightarrow{\text{div}_V} H_V, \label{eq:MFDderham1} \\
H_V &\xrightarrow{\text{grad}_V} \bm{H_V} \xrightarrow{\text{curl}_V} \bm{H_D} \xrightarrow{\text{div}_D} H_D.\label{eq:MFDderham2}
\end{align}
Note again that the nature of the discretization method, when used for Maxwell's system, enforces the PDE constraints strongly at the discrete level.

\subsection{MFD for Maxwell's Equations}
Since the energy conservation property is important in electromagnetic applications, we consider the Crank--Nicolson scheme (a symplectic time integrator) with time-step $\tau$ and suitable initial conditions given by appropriate interpolation to the dual meshes, $p_D^0$, $\bm{E}_D^0$, and $\bm{B}_V^0$. The fully discretized system becomes: find $p^n_D \in H_D$, $\bm{E}^n_D \in \bm{H_D}$, and $\bm{B}^n_V \in \bm{H_V}$ such that,

\begin{align}
\frac{2}{\tau} \bm{B}_V^n + \text{curl}_D \, \bm{E}_D^n & =  \bm{g}^V_{\bm{B}},  \label{eq:MFDFaraday}  \\
\frac{2}{\tau} \bm{E}_D^n - \text{curl}_V \, \bm{B}_V^n + \text{grad}_D \, p_D^n & =  \bm{g}^D_{\bm{E}}, \label{eq:MFDAmpere} \\
\frac{2}{\tau} p_D^n + \text{div}_D \, \bm{E}_D^n & =  g^D_p, \label{MFDGauss}
\end{align}
where the functions on the right-hand sides are given by,
\begin{align*}
\bm{g}^V_{\bm{B}} &:= \frac{2}{\tau} \bm{B}_V^{n-1} - \text{curl}_D \, \bm{E}_D^{n-1}, \\
\bm{g}^D_{\bm{E}} & := \frac{2}{\tau} \bm{E}_D^{n-1} + \text{curl}_V \, \bm{B}_V^{n-1} - \text{grad}_D \,p_D^{n-1} - (\bm{j}_D^n + \bm{j}_D^{n-1}), \\
g^D_p & := \frac{2}{\tau} p_D^{n-1} - \text{div}_D \, \bm{E}_D^{n-1}.
\end{align*}
The current density, $\bm{j}_D \in \bm{H_D}$, is $\left(\bm{j}_D \right)_{ij}^D = |\partial V_{ij}^D|^{-1} \int_{\partial V_{ij}} \bm{j} \cdot \bm{n}_{ij}^V \, d\bm{x}$. 
\begin{remark}Recall that  $p(\bm{x},t)=0$ for all $\bm{x}\in \Omega$ and $t \geq 0$ with initial condition $p(\bm{x},0) = 0$. Therefore, (\ref{eq:MFDFaraday})--(\ref{MFDGauss}) could be solved without including $p$ and the analysis that follows remains the same even without $p$.  However, $p$ is included to demonstrate the relationship between the $\operatorname{grad}$, $\operatorname{curl}$, and $\operatorname{div}$ spaces, which are chosen such that the equations obey the mappings given by the sequences (\ref{eq:MFDderham1})--(\ref{eq:MFDderham2}). Also, note that we could have put $\bm{E}$ and $p$ on the Voronoi mesh, and $\bm{B}$ on the Delaunay mesh instead of the choice above. However, by putting the magnetic field on the Voronoi mesh, $\bm{B}_V \in \bm{H_V}$, guarantees $\text{div}_V \, \bm{B}_V = 0$. Thus, the divergence of the magnetic field is constant zero on the Voronoi nodes which gives us a divergence-free magnetic field on all Delaunay tetrahedra. 
\end{remark}
Using the definitions of the MFD operators, the linear system for the MFD scheme (\ref{eq:MFDFaraday})--(\ref{MFDGauss}) is given by
\begin{equation}
 \underbrace{\begin{bmatrix}
 \frac{2}{\tau} \mathcal{I}_{\bm{e}^V} & \mathcal{D}_{\partial D}^{-1} \, \mathcal{K} \, \mathcal{D}_{\bm{e}^D}  &    \\
 - \mathcal{D}_{\partial V}^{-1} \, \mathcal{K}^T \, \mathcal{D}_{\bm{e}^V}   &  \frac{2}{\tau} \mathcal{I}_{\bm{e}^D}  & \mathcal{D}_{\bm{e}^D}^{-1} \, \mathcal{G} \\
  	& -\mathcal{D}_{V}^{-1} \, \mathcal{G}^T \, \mathcal{D}_{\partial V} & \frac{2}{\tau} \mathcal{I}_V 
 \end{bmatrix}}_{=:\mathcal{A}_{\text{MFD}}}
 \begin{bmatrix}
 \bm{B}_V^n \\
 \bm{E}_D^n \\
 p_D^n
 \end{bmatrix}
 =
  \begin{bmatrix}
 \bm{g}^V_{\bm{B}} \\
\bm{g}^D_{\bm{E}} \\
 g^D_p
 \end{bmatrix}. \label{eq:MFDLinear}
\end{equation}

\section{Finite-Element Framework}\label{sec:FEM} 
Next, we consider a structure-preserving\newline mixed FEM for the Maxwell system \cite{FEextcalc}. To approximate the inner-product terms on the computational domain, we implement mass lumping, which results in diagonal mass matrices. This gives us FE blocks in terms of MFD mesh information, which is useful when drawing connections in the next section.

Consider the differential operator, $\mathfrak{D}$, and Sobolev space, 
\begin{equation*}
H(\mathfrak{D}) := \{ u \in L^2 (\Omega), \mathfrak{D}u \in L^2(\Omega) \},
\end{equation*}
where $\mathfrak{D}$ is grad, curl, or div. Let $\| \cdot \|$ and $\langle \cdot, \cdot \rangle$ denote the $L^2$ norm and inner product, respectively. Define the finite-dimensional function spaces with appropriate boundary conditions, $H_{h,0}(\mathfrak{D})$. For the magnetic field, $\bm{B}_h\in\bm{H}_{h,0} (\operatorname{div})$, we use the Raviart--Thomas (RT) element, the N\'ed\'elec element for the electric field $\bm{E}_h\in\bm{H}_{h,0}(\text{curl})$, and the Lagrange element for the auxiliary pressure $p_h\in H_{h,0} (\operatorname{grad})$. Defining $\bm{V}_h := \bm{H}_{h,0} (\operatorname{div}) \times \bm{H}_{h,0}(\text{curl}) \times H_{h,0} (\operatorname{grad})$,
the FEM discretization for Maxwell's equations becomes, find ($\bm{B}_h^n, \bm{E}_h^n, p_h^n) \in \bm{V}_h$ such that for all $(\bm{C}_h, \bm{F}_h, q_h ) \in \bm{V}_h$,
\begin{align}
\frac{2}{\tau}\langle \bm B_h^n, \bm C_h \rangle + \langle \nabla \times \bm E_h^n, \bm C_h \rangle &= (\bm{g_B}, \bm C_h), \label{eq:FEMFaraday} \\
\frac{2}{\tau} \langle \bm E_h^n, \bm F_h \rangle - \langle \bm B_h^n, \nabla \times \bm F_h \rangle  + \langle \nabla p_h^n, \bm F_h \rangle &= (\bm{g_E}, \bm F_h),  \label{eq:FEMAmpere} \\
\frac{2}{\tau} \langle p_h^n, q_h \rangle - \langle \bm E_h^n, \nabla q_h \rangle &= (g_p, q_h), \label{eq:FEMGauss}
\end{align} 
where the functionals on the right-hand side are given by,
\begin{align*}
(\bm{g}_{\bm{B}},\bm{C}_h) &= \frac{2}{\tau}\langle \bm{B}_h^{n-1},\bm{C}_h \rangle - \langle \nabla \times \bm{E}_h^{n-1}, \bm{C}_h \rangle, \\
(\bm{g}_{\bm{E}},\bm{F}_h) &= \frac{2}{\tau} \langle \bm{E}_h^{n-1}, \bm{F}_h \rangle - \langle \nabla p_h^{n-1}, \bm{F}_h \rangle + \langle \bm{B}_h^{n-1}, \nabla \times \bm{F}_h \rangle - \langle \bm{j}^n+\bm{j}^{n-1}, \bm{F}_h \rangle,\\
(g_p,q_h) &= \frac{2}{\tau} \langle p_h^{n-1}, q_h \rangle + \langle \bm{E}_h^{n-1}, \nabla q_h \rangle.
\end{align*}

To write this as a linear system, we introduce discrete gradient and curl operators. Let $\{ \phi_i^{\operatorname{grad}} \}$, $\{\bm{\phi}_i^{\operatorname{curl}} \}$, and $\{\bm{\phi}_i^{\operatorname{div}} \}$ be the bases of $H_{h,0}(\operatorname{grad})$, $\bm{H}_{h,0} (\operatorname{curl})$, and  $\bm{H}_{h,0} (\operatorname{div})$, respectively. Let $\{\bm{\eta}_i^{\operatorname{curl}}\}$ be the degrees of freedom for the N\'ed\'elec space,  and $\{ \bm{\eta}_i^{\operatorname{div}}\}$  for the RT space. Then, define the FE discrete gradient, $\mathcal{G}^{FE}$, and curl, $\mathcal{K}^{FE}$, in terms of the degrees of freedom,
\begin{align}
\mathcal{G}^{FE}_{ij} &:= \bm{\eta}_i^{\operatorname{curl}} \left( \nabla \phi_j^{\operatorname{grad}} \right) = \frac{1}{|e_{i}|} \int_{e_{i}} \nabla \phi^{\operatorname{grad}}_j \cdot \bm{t}_{i} \, ds, \label{eq:gradFEdof} \\
\mathcal{K}^{FE}_{ij} &:= \bm{\eta}_i^{\operatorname{div}} \left( \nabla \times \bm{\phi}_j^{\operatorname{curl}} \right) = \frac{1}{|f_i|} \int_{f_i} \nabla \times \bm{\phi}_j^{\operatorname{curl}} \cdot \bm{n}_i \, dS. \label{eq:curlFEdof}
\end{align}
Note that the degrees of freedom are scaled by mesh data; the $\operatorname{curl}$ degrees of freedom are scaled by inverse edge lengths, and $\operatorname{div}$ degrees of freedom are scaled by inverse face areas.
Thus, the FE linear system for (\ref{eq:FEMFaraday})--(\ref{eq:FEMGauss}) is given by

\begin{equation}
\underbrace{\begin{bmatrix}
\frac{2}{\tau} \mathcal{M}_{\bm{B}} & \mathcal{M}_{\bm{B}} \mathcal{K}^{FE} &   \\
- \left( \mathcal{K}^{FE} \right)^T \mathcal{M}_{\bm{B}} & \frac{2}{\tau} \mathcal{M}_{\bm{E}} & \mathcal{M}_{\bm{E}} \mathcal{G}^{FE} \\
   	& - \left( \mathcal{G}^{FE} \right)^T \mathcal{M}_{\bm{E}} & \frac{2}{\tau} \mathcal{M}_p  
\end{bmatrix}}_{=:\mathcal{A}_{\text{FE}}}  
\begin{bmatrix}
\bm{B}_h^n \\
\bm{E}_h^n \\
p_h^n
\end{bmatrix}
= 
\begin{bmatrix}
\bm{g}_{\bm{B}}\\
\bm{g}_{\bm{E}} \\
g_{p}
\end{bmatrix}, \label{eq:FELinear1}
\end{equation}
with mass matrices given by $\left(\mathcal{M}_p \right)_{ij} = \langle \phi_j^{\operatorname{grad}}, \phi_i^{\operatorname{grad}} \rangle$, $\left(\mathcal{M}_{\bm{E}} \right)_{ij} = \langle \bm{\phi}_j^{\operatorname{curl}}, \bm{\phi}_i^{\operatorname{curl}} \rangle$, and  $\left(\mathcal{M}_{\bm{B}} \right)_{ij} = \langle \bm{\phi}_j^{\operatorname{div}}, \bm{\phi}_i^{\operatorname{div}} \rangle$.
\subsection{Mass Lumping}\label{subsec:masslumping}
Now that we have linear systems for two discretization methods, our goal is to draw similarities between (\ref{eq:MFDLinear}) and (\ref{eq:FELinear1}). Notice that the discrete differential operators are already in the same blocks; however, the MFD system has blocks with diagonal entries containing mesh information while the FEM system has mass matrices. Using ideas from \cite{Nedlump, RTlump, MLBrezzi, MLBook} to approximate the mass matrices with diagonal matrices, we implement mass-lumping schemes. The quadrature schemes associated with the lumping result in a quantity that has physical significance, namely a volume unit for 3D meshes, and an area on the mesh for the 2D case. Therefore, we expect that mass lumping gives diagonal matrices where the diagonal represents some sort of volume (3D) or area (2D) in the dual mesh set-up. We see in the next section that this is true for appropriate choices of quadrature weights, and that these specific lumped matrices help draw more connections between the FE and MFD systems.

First, consider the Lagrange element mass matrix,  $\mathcal{M}_p$. For simplicity, assume we have a regular primal (Delaunay) mesh for the finite-element grid, and an associated dual (Voronoi) mesh. The entries of $\mathcal{M}_p$ are integrals of the Lagrange basis functions, but can be rewritten in terms of a quadrature rule,
\begin{equation*}
\left( \mathcal{M}_p \right)_{ij} = \langle \phi_i^{\operatorname{grad}}, \phi_j^{\operatorname{grad}} \rangle = \sum_{l=1}^{N_V} \int_{D_l} \  \phi_i^{\operatorname{grad}} \phi_j^{\operatorname{grad}} d\bm{x} \approx \sum_{l=1}^{N_V} \sum_{k=1}^{d+1} \omega_{lk} \,\,  \phi_i^{\operatorname{grad}}(x_{lk}) \phi_j^{\operatorname{grad}}(x_{lk}),
\end{equation*}
where $d$ is the dimension, the sum in $l$ is over all of the elements, $d+1$ is the number of nodes per element, and $\omega_{lk}$ and $x_{lk}$ are the quadrature weights and nodes, respectively. Typically, Gaussian quadrature is used, but we manufacture a new rule such that the choice of nodes and weights gives the approximation, $\mathcal{M}_p \approx \mathcal{D}_V$. 
 \begin{figure}[H]
  \centering
\begin{tikzpicture}[scale = 5.7, every node/.style={scale = 1}]
\draw[thick] (.5,0) to (.25, .433) to (-.25, .433) to (-.5, 0) to (-.25, -.433) to (.25, -.433) to cycle;
\draw[thick] (.25, .433) to (-.25, -.433);
\draw[thick] (-.25, .433) to (.25, -.433);
\draw[thick] (.5,0) to (-.5, 0);
\draw[dashed, red, thick] (.25, .1443) to (.25, -.1433) to (0, -.2887) to (-.25, -.1443) to (-.25, .1443) to (0, .2887) to cycle;
\draw[dashed, red, thick] (0, .2887) to (0, .433);
\draw[dashed, red, thick] (0, -.2887) to (0, -.433);
\draw[dashed, red, thick] (-.25, .1433) to (-0.375, 0.2165);
\draw[dashed, red, thick] (.25, .1433) to (0.375, 0.2165);
\draw[dashed, red, thick] (-.25, -.1433) to (-0.375, -0.2165);
\draw[dashed, red, thick] (.25, -.1433) to (0.375, -0.2165);

\fill (0,0) circle (.5pt);
\fill (.5,0) circle (.5pt);
\fill (.25, .433) circle (.5pt);
\fill (-.25, .433) circle (.5pt);
\fill (-.5, 0) circle (.5pt);
\fill (-.25, -.433) circle (.5pt);
\fill (.25, -.433) circle (.5pt);
\fill[red] (.25, .1443) circle (.5pt);
\fill[red] (.25, -.1433) circle (.5pt);
\fill[red] (0, -.2887) circle (.5pt);
\fill[red] (-.25, -.1443) circle (.5pt);
\fill[red] (-.25, .1443) circle (.5pt);
\fill[red] (0, .2887) circle (.5pt);

\node at (0, .09) {$\bm{x}_1^D$};
\node at (.57, 0) {$\bm{x}_2^D$};
\node at (-.57, 0) {$\bm{x}_5^D$};
\node at (.25, .5){$\bm{x}_7^D$};
\node at (-.25, .5){$\bm{x}_6^D$};
\node at (-.25, - .5){$\bm{x}_4^D$};
\node at (.25, -.5){$\bm{x}_3^D$};

\node at (.25, -.2) {$\bm{x}_1^V$};
\node at ( .07, -.3){$\bm{x}_2^V$};
\node at (-.25, -.2) {$\bm{x}_3^V$};
\node at (-.25, .2) {$\bm{x}_4^V$};
\node at ( .07, .3){$\bm{x}_5^V$};
\node at (.25, .2) {$\bm{x}_6^V$};

\end{tikzpicture}

      \captionsetup{format=hang}
  \caption{Two-dimensional subset of regular dual mesh (as in Figure \ref{fig:dualmeshes}). Delaunay mesh in black solid lines, Voronoi mesh in red dashed line.} \label{fig:lumping}
\end{figure}
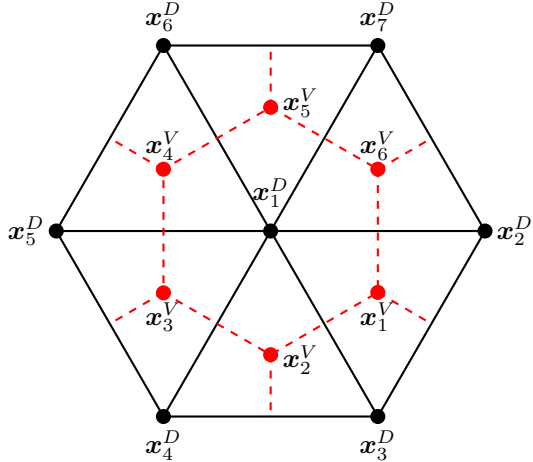

To illustrate this further, examine a subset of a regular dual mesh setup in 2D, shown in Figure \ref{fig:lumping}. On the sub-mesh, we choose the quadrature points $x_{lk}$ to be the Delaunay nodes, $\bm{x}_m^D$, and then determine what the appropriate weights should be. Consider two cases: $i = j$ and $i \neq j$.
When $i\neq j$, we have that all terms in the sum are zero, as $\phi^{\operatorname{grad}}_i(\bm{x}^D_n) = 0$ if $i \neq n$ and $\phi^{\operatorname{grad}}_j(\bm{x}^D_n) = 0$ if $j \neq n$ by the definition of the finite-element basis functions and degrees of freedom. The only way to get a nonzero term in the sum is if $i = j = n$.
When $i=j$, note that each term in the sum is only nonzero when $x_{lk} = \bm{x}_i^D$. Consider the small mesh in Figure \ref{fig:lumping}, and the case when $i=j=1$. Define the element enumeration using MFD notation, i.e., let $D_1$ be the Delaunay element defined by Voronoi node $\bm{x}_1^V$, and let $D_l$ be defined by $\bm{x}_l^V$. Let $k$ enumerate the nodes in an element in increasing order. For example, for $l=1$, we have $x_{11} = \bm{x}_1^D$, $x_{12} =\bm{x}_2^D$ and  $x_{13} =\bm{x}_3^D$ in Figure \ref{fig:lumping}.  The quadrature rule on this sub-mesh becomes,
\begin{align*}
\sum_{l=1}^{N_V} \sum_{k=1}^{d+1} \omega_{lk} \,\,  \phi_1^{\operatorname{grad}}(x_{lk}) \phi_1^{\operatorname{grad}}(x_{lk}) = & ~\omega_{11}  \phi_1^{\operatorname{grad}}(x_{11}) \phi_1^{\operatorname{grad}}(x_{11}) + \omega_{21}\phi_1^{\operatorname{grad}}(x_{21}) \phi_1^{\operatorname{grad}}(x_{21})\\
+& ~\omega_{31}\phi_1^{\operatorname{grad}}(x_{31}) \phi_1^{\operatorname{grad}}(x_{31}) + \omega_{41}\phi_1^{\operatorname{grad}}(x_{41}) \phi_1^{\operatorname{grad}}(x_{41})\\
+& ~\omega_{51}\phi_1^{\operatorname{grad}}(x_{51}) \phi_1^{\operatorname{grad}}(x_{51}) + \omega_{61}\phi_1^{\operatorname{grad}}(x_{61}) \phi_1^{\operatorname{grad}}(x_{61}) \\
=& ~\omega_{11} + \omega_{21}+\omega_{31}+\omega_{41}+\omega_{51}+\omega_{61}.
\end{align*}

To get the correct approximation by mass lumping, this sum must equal $|V_1|$, the area of the Voronoi cell defined by the six Voronoi nodes.  Therefore, we choose $\omega_{11} = |V_1 \cap D_1|$, $\omega_{21} = |V_1 \cap D_2|,$ ..., $\omega_{61} = |V_1 \cap D_6|$. Substituting in we see that,
\begin{align*}
\sum_{l=1}^{N_V} \sum_{k=1}^{d+1}\omega_{lk} \,\,  \phi_1^{\operatorname{grad}}(x_{lk}) \phi_1^{\operatorname{grad}}(x_{lk}) &= \omega_{11} + \omega_{21}+\omega_{31}+\omega_{41}+\omega_{51}+\omega_{61} \\
&= |V_1 \cap D_1| + |V_1 \cap D_2| + ... + |V_1 \cap D_6| \\
&= |V_1|.
\end{align*}
The same argument can be applied more generally to the $i=j$ case, where we want $\omega_{lk} = |D_l \cap V_i|$ when $x_{lk} = \bm{x}_i^D$. This gives us the approximation $(\mathcal{M}_p)_{ii} \approx |V_i|$. The same idea applies to the 3D case, except we have four nodes in each element and the weights represent a volume instead of an area. 

In general, for the Lagrange elements with mass matrix $\mathcal{M}_p$, we use the following quadrature rule to lump the entries onto the diagonal. Summing over elements and nodes per element,
we have for scalar functions $u, v \in H(\operatorname{grad})$,
\begin{align*}
\langle u, v \rangle_{\widetilde{\mathcal{M}_p}} := \sum_{l=1}^{N_V} \sum_{k=1}^{d+1} \omega^{\operatorname{grad}}_{lk} \,\,  u(x_{lk})v(x_{lk}), \quad
\| u \|_{\widetilde{\mathcal{M}_p}}^2 := \langle u, u \rangle_{\widetilde{\mathcal{M}_p}},
\end{align*}
where the $x_{lk}$ are given by the nodes of the Delaunay mesh, and $\omega^{\operatorname{grad}}_{lk} = |D_l \cap V_i|$ when $x_{lk} = \bm{x}_i^D$. Thus, we have that $\mathcal{M}_p \approx \widetilde{\mathcal{M}_p} = \mathcal{D}_V$, which is the standard mass-lumping scheme for $H(\operatorname{grad})$ \cite{MLBook}. 

Similarly for the RT elements, we introduce the following inner product, summing over elements and faces per element, for $\bm u, \bm v \in \bm H(\operatorname{div})$, 

\begin{align*}
\langle \bm u, \bm v \rangle_{\widetilde{\mathcal{M}}_{\bm B}} &:= \sum_{l=1}^{N_V} \sum_{k=1}^{d+1} \omega^{\operatorname{div}}_{lk} \left( \frac{1}{f_{lk}} \int_{f_{lk}} \bm u \cdot \bm{\hat{n}} \, dS\right) \left(\frac{1}{f_{lk}} \int_{f_{lk}} \bm v \cdot \bm{\hat{n}} \, dS \right),\\
\|\bm u\|_{\widetilde{\mathcal{M}}_{\bm B}}^2 &:= \langle \bm u, \bm u \rangle_{\widetilde{\mathcal{M}}_{\bm B}}.
\end{align*}
Using the same idea as the previous lumping scheme and following~\cite{RTlump}, we choose quadrature weights to be $\omega_{lk}^{\operatorname{div}} = \left| \bm{e}_{lm}^V \cap D_l \right|\,|\partial D_{lm}|$ when $f_{lk} = \partial D_{lm}$, where $\partial D_{lm}$ is the $i$th face in the Delaunay mesh enumeration. Then, the mass matrix is approximated by $\mathcal{M}_{\bm{B}} \approx \widetilde{\mathcal{M}_{\bm{B}}} = \mathcal{D}_{\partial D} \mathcal{D}_{\bm e ^V}$.

Finally, we examine the N\'ed\'elec element mass matrix, $\mathcal{M}_{\bm{E}}$, by following~\cite{Nedlump}. We have the entries computed with the following quadrature rule, for $\bm u, \bm v \in \bm H(\operatorname{curl})$,
\begin{align*}
\langle \bm u, \bm v \rangle_{\widetilde{\mathcal{M}}_{\bm E}} &:=\sum_{l=1}^{N_D} \sum_{k=1}^{\frac{d(d+1)}{2}} \omega^{\operatorname{curl}}_{lk} \left( \frac{1}{e_{lk}} \int_{e_{lk}} \bm u  \cdot \bm{\hat{t}} \, ds\right) \left(\frac{1}{e_{lk}} \int_{e_{lk}} \bm v \cdot \bm{\hat{t}} \, ds \right),\\
\|\bm u\|_{\widetilde{\mathcal{M}}_{\bm E}}^2 &:= \langle \bm u, \bm u \rangle_{\widetilde{\mathcal{M}}_{\bm E}},
\end{align*}
where edge $e_{lk} = \bm{e}_{mn}^D$ is the $i$th edge in the Delaunay mesh enumeration, and $e_{lk} = \bm{e}_{mn}^D$ is contained in Delaunay tetrahedron $D_l$. Choosing weights to be $\omega_{lk}^{\operatorname{curl}} = \left| \partial V_{mn} \cap D_l \right|\,\left| \bm{e}_{mn}^D \right|$ yields $\mathcal{M}_{\bm{E}} \approx \widetilde{\mathcal{M}_{\bm{E}} } = \mathcal{D}_{\partial V} \mathcal{D}_{\bm{e}^D}$.
\begin{remark}
The mass-lumping schemes can be modified for non-constant $\epsilon(\bm{x})$ and $\mu(\bm{x})$ by taking a constant approximation of the coefficients on each element. With the piecewise constant approximation, the quadrature weights and sparsity of the mass-lumping schemes are unchanged.
\end{remark}
Putting this all together we have the mass-lumped FE system as,
\begin{equation}
\underbrace{
	\begin{bmatrix}
\frac{2}{\tau} \widetilde{\mathcal{M}_{\bm{B}}} & \widetilde{\mathcal{M}_{\bm{B}}} \mathcal{K}^{FE} &   \\
-  \left( \mathcal{K}^{FE} \right)^T \widetilde{\mathcal{M}_{\bm{B}}} & \frac{2}{\tau} \widetilde{\mathcal{M}_{\bm{E}}} & \widetilde{\mathcal{M}_{\bm{E}}} \mathcal{G}^{FE} \\
& -  \left(\mathcal{G}^{FE} \right)^T  \widetilde{\mathcal{M}_{\bm{E}}} & \frac{2}{\tau} \widetilde{\mathcal{M}_p}  
\end{bmatrix}
}_{=:\widetilde{\mathcal{A}}_{\text{FE}}}  
\begin{bmatrix}
\bm{B}_h^n \\
\bm{E}_h^n \\
p_h^n
\end{bmatrix}
= 
\begin{bmatrix}
\bm{g}_{\bm{B}}\\
\bm{g}_{\bm{E}} \\
g_{p}
\end{bmatrix}. \label{eq:FEmasslumped}
\end{equation}

\section{Connections between MFD and FEM}
To study the well-posedness of the mimetic discretization (\ref{eq:MFDFaraday})--(\ref{MFDGauss}) and design efficient solvers for the resulting linear system, we draw connections to the FE scheme, noting that with mass lumping, the two systems have the same block structure. First, we rewrite the FE gradient and curl operators in terms of the MFD incidence matrices as follows,
\begin{align}\label{eq:FEgrad}
\mathcal{G}^{FE} = \mathcal{D}_{\bm{e}^D}^{-1} \mathcal{G}, \qquad 
\mathcal{K}^{FE} = \mathcal{D}_{\partial D}^{-1} \mathcal{K} \mathcal{D}_{\bm{e}^D},
\end{align}
where we scale incident matrices on the mesh to be consistent with the definitions in (\ref{eq:gradFEdof})--(\ref{eq:curlFEdof}).
Applying a left scaling to the mass-lumping schemes, and substituting in (\ref{eq:FEgrad}) to (\ref{eq:FEmasslumped}), gives a new scaled FE linear system,
\begin{align}\label{eq:FELinearScaled}
\underbrace{ \begin{bmatrix}
\mathcal{D}_{\partial D}^{-1} \mathcal{D}_{\bm e^V}^{-1}  & & \\
& \mathcal{D}_{\partial V}^{-1} \mathcal{D}_{\bm e^D}^{-1} & \\
& & \mathcal{D}_V^{-1}
\end{bmatrix} \widetilde{\mathcal{A}}_{\text{FE}}}_{=:\mathcal{A}_{\text{SFE}}}
\begin{bmatrix}
\bm{B}_h^n \\
\bm{E}_h^n \\
p_h^n
\end{bmatrix}& 
=  \begin{bmatrix}
\mathcal{D}_{\partial D}^{-1} \mathcal{D}_{\bm e^V}^{-1} & & \\
& \mathcal{D}_{\partial V}^{-1} \mathcal{D}_{\bm e^D}^{-1} & \\
& & \mathcal{D}_V^{-1}
\end{bmatrix}
\begin{bmatrix}
\bm{g}_{\bm{B}}\\
\bm{g}_{\bm{E}} \\
g_{p}
\end{bmatrix}.
\end{align}
Substituting in $\widetilde{\mathcal{M}_p} = D_V$, $\widetilde{\mathcal{M}_{\bm{B}}} = \mathcal{D}_{\partial D} \mathcal{D}_{\bm e ^V}$, and $\widetilde{\mathcal{M}_{\bm{E}} } = \mathcal{D}_{\partial V} \mathcal{D}_{\bm{e}^D}$, we get $\mathcal{A}_{\text{SFE}} = \mathcal{A}_{\text{MFD}}$ and recover \emph{exactly} the MFD system in (\ref{eq:MFDLinear}). 

With the above equivalence, we apply a FE well-posedness proof to the mass-lumped FE system, thus obtaining the well-posedness of the mimetic system. For simplicity in dealing with the scaled system, we also introduce the function space, $
\widetilde{\bm{V}_h} \ := \ \widetilde{\bm{H}_{h,0}}(\operatorname{div}) \times \widetilde{\bm{H}_{h,0}}(\operatorname{curl}) \times \widetilde{H_{h,0}}(\operatorname{grad})$, which is $\bm{V}_h$ with basis functions scaled to reflect the left scaling in (\ref{eq:FELinearScaled}),
\begin{center}
\begin{tabular}{ccl}
$\bm{\widetilde{H}}_{h,0}(\operatorname{div}) = \text{span}\{ \bm{\widetilde{\phi}}_i^{\operatorname{div}} \}$, & where & $\bm{\widetilde{\phi}}_i^{\operatorname{div}} = \left(\mathcal{D}_{\partial D}^{-1}\mathcal{D}_{e^V}^{-1} \right)_{ii} \bm{\phi}_i^{\operatorname{div}}$; \\
$\bm{\widetilde{H}}_{h,0}(\operatorname{curl}) = \text{span}\{ \bm{\widetilde{\phi}}_i^{\operatorname{curl}} \}$, & where & $\bm{\widetilde{\phi}}_i^{\operatorname{curl}} = \left(\mathcal{D}_{\partial V}^{-1} \mathcal{D}_{\bm{e}^D}^{-1} \right)_{ii} \bm{\phi}_i^{\operatorname{curl}}$; \\
$\widetilde{H}_{h,0}(\operatorname{grad}) = \text{span}\{ \widetilde{\phi}_i^{\operatorname{grad}} \}$, & where & $\widetilde{\phi}_i^{\operatorname{grad}} = \left(\mathcal{D}_V^{-1} \right)_{ii} \phi_i^{\operatorname{grad}}$.
\end{tabular}
\end{center}
Now, (\ref{eq:FELinearScaled}), and thus the MFD Maxwell System \eqref{eq:MFDLinear}, can be written in variational form (for simplicity, the subscripts indicating the time-step iteration and inclusion in the FE space are excluded): find $(\bm{B}, \bm{E}, p) \in \bm{V}_h$ such that for all $(\widetilde{\bm{C}},\widetilde{\bm{F}},\widetilde{q}) \in \widetilde{\bm{V}_h}$,
\begin{align}
\frac{2}{\tau} \langle \bm{B}, \widetilde{\bm{C}} \rangle_{\widetilde{\mathcal{M}}_{\bm{B}}} + \langle \nabla \times \bm{E}, \widetilde{\bm{C}} \rangle_{\widetilde{\mathcal{M}}_{\bm{B}}}  &= (\bm g_{\bm{B}}, \widetilde{\bm{C}}), \label{eq:lumpedFaraday} \\
-\langle \bm{B},  \nabla \times \widetilde{\bm{F}}  \rangle_{\widetilde{\mathcal{M}}_{\bm{B}}} + \frac{2}{\tau} \langle \bm{E}, \widetilde{\bm{F}} \rangle_{\widetilde{\mathcal{M}}_{\bm{E}}} + \langle \nabla p, \widetilde{\bm{F}} \rangle_{\widetilde{\mathcal{M}}_{\bm{E}}} &= (\bm g_{\bm{E}}, \widetilde{\bm{F}}), \label{eq:lumpedAmpere} \\
- \langle \bm{E}, \nabla \widetilde{q} \rangle _ {\widetilde{\mathcal{M}}_{\bm{E}}} + \frac{2}{\tau} \langle p, \widetilde{q} \rangle_{\widetilde{\mathcal{M}}_{p}} &= (g_{p}, \widetilde{q}). \label{eq:lumpedGauss}
\end{align}
Note that MFD in this case can be considered as a Petrov-Galerkin FEM. 

Following the notation in \cite{jxsolver}, we introduce the bilinear form,
\begin{align}
a(\bm{B}, \bm{E}, p; \widetilde{\bm{C}}, \widetilde{\bm{F}}, \widetilde{q}) \ := \ &\frac{2}{\tau} \langle \bm{B}, \widetilde{\bm{C}} \rangle_{\widetilde{\mathcal{M}}_{\bm{B}}} + \langle \nabla \times \bm{E}, \widetilde{\bm{C}} \rangle_{\widetilde{\mathcal{M}}_{\bm{B}}}  -\langle \bm{B},  \nabla \times \widetilde{\bm{F}}  \rangle_{\widetilde{\mathcal{M}}_{\bm{B}}} \label{eq:bilinearform} \\
&+ \frac{2}{\tau} \langle \bm{E}, \widetilde{\bm{F}} \rangle_{\widetilde{\mathcal{M}}_{\bm{E}}} + \langle \nabla p, \widetilde{\bm{F}} \rangle_{\widetilde{\mathcal{M}}_{\bm{B}}}  - \langle \bm{E}, \nabla \widetilde{q} \rangle _ {\widetilde{\mathcal{M}}_{\bm{E}}} + \frac{2}{\tau} \langle p, \widetilde{q} \rangle_{\widetilde{\mathcal{M}}_{p}}, \nonumber
\end{align}
and the weighted norms
\begin{align}
\| \bm{B} \|_{\operatorname{div}}^2 \ &:= \ \frac{2}{\tau} \| \bm{B}\|_{\widetilde{\mathcal{M}}_{\bm{B}}} ^2 + \| \nabla \cdot \bm{B} \| ^2, \\
\| \bm{E} \|_{\operatorname{curl}}^2 \ &:= \ \frac{2}{\tau} \| \bm{E}\|_{\widetilde{\mathcal{M}}_{\bm{E}}} ^2 + \frac{\tau}{2}\| \nabla \times \bm{E} \|_{\widetilde{\mathcal{M}}_{\bm{B}}} ^2, \\
\| p \|_{\operatorname{grad}}^2 \ &:= \ \frac{2}{\tau} \| p \|_{\widetilde{\mathcal{M}}_{p}} ^2 + \frac{\tau}{2}\| \nabla p \|_{\widetilde{\mathcal{M}}_{\bm{E}}} ^2, \\
\triplenorm{\left(\bm{B},\bm{E},p\right)}^2 \ & := \
\|\bm{B}\|_{\operatorname{div}}^2 + \|\bm{E}\|_{\operatorname{curl}}^2 + \|p\|_{\operatorname{grad}}^2.
\end{align}
The following theorem shows that (\ref{eq:lumpedFaraday})--(\ref{eq:lumpedGauss}) is well-posed, and  therefore (\ref{eq:MFDFaraday})--(\ref{MFDGauss}) is well-posed.
\begin{theorem} 
If $\bm g_{\bm{B}} \in \left( \bm{H}_{h,0} (\operatorname{div}) \right) '$, the MFD system  (\ref{eq:lumpedFaraday})--(\ref{eq:lumpedGauss}) is well-posed, namely, it satisfies the \textit{inf-sup} condition, 
\begin{equation}
\sup_{\substack{(\widetilde{\bm{C}}, \widetilde{\bm{F}}, \widetilde{q}) \in \widetilde{\bm{V_h}} \\
				(\widetilde{\bm{C}}, \widetilde{\bm{F}},  \widetilde{q}) \neq \bm 0}}	\frac{a(\bm{B}, \bm{E}, p; \widetilde{\bm{C}}, \widetilde{\bm{F}}, \widetilde{q})}{ \triplenorm{( \widetilde{\bm{C}}, \widetilde{\bm{F}},  \widetilde{q}\, )}} \geq \frac{1}{4} \triplenorm{\left( \bm{B}, 	\bm{E} , p \right)} , \label{eq:infsuporiginal}
\end{equation}
and is bounded,
{\small 
\begin{equation}
a(\bm{B}, \bm{E}, p; \widetilde{\bm{C}}, \widetilde{\bm{F}},  \widetilde{q}) \leq C \triplenorm{ \bm{B}, 	\bm{E} , p}  \,\triplenorm{ ( \widetilde{\bm{C}}, \widetilde{\bm{F}},  \widetilde{q} \,)}. \label{eq:boundednessoriginal}
\end{equation}
}
\end{theorem}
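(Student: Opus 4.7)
The plan is to establish (\ref{eq:infsuporiginal}) by an explicit ``Fortin-type'' construction of near-optimal test functions, and to obtain (\ref{eq:boundednessoriginal}) by a direct termwise Cauchy--Schwarz. This is the standard block-operator strategy in \cite{jxsolver}, but every appearance of the $L^2$ inner product must be replaced by the corresponding mass-lumped pairing $\langle\cdot,\cdot\rangle_{\widetilde{\mathcal{M}}_\bullet}$. The antisymmetric structure of $a$ is the main lever: testing symmetrically with $(\widetilde{\bm{C}},\widetilde{\bm{F}},\widetilde{q})=(\bm{B},\bm{E},p)$ collapses the two off-diagonal pairs, since $\langle\nabla\times\bm{E},\bm{B}\rangle_{\widetilde{\mathcal{M}}_{\bm{B}}}-\langle\bm{B},\nabla\times\bm{E}\rangle_{\widetilde{\mathcal{M}}_{\bm{B}}}=0$ and likewise for the $(\bm{E},p)$ pair, leaving only the diagonal mass contributions $\tfrac{2}{\tau}(\|\bm{B}\|^2_{\widetilde{\mathcal{M}}_{\bm{B}}}+\|\bm{E}\|^2_{\widetilde{\mathcal{M}}_{\bm{E}}}+\|p\|^2_{\widetilde{\mathcal{M}}_p})$.

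The next step is to pick up the curl and gradient pieces of $\triplenorm{\cdot}^2$. By the discrete de Rham exactness (\ref{eq:MFDderham1})--(\ref{eq:MFDderham2}) one has $\nabla\times\bm{E}\in\bm{H}_{h,0}(\operatorname{div})$ and $\nabla p\in\bm{H}_{h,0}(\operatorname{curl})$, so the augmented triple
\[
(\widetilde{\bm{C}}^{\ast},\widetilde{\bm{F}}^{\ast},\widetilde{q}^{\ast}) \ := \ \bigl(\bm{B}+\tfrac{\tau}{2}\nabla\times\bm{E},\; \bm{E}+\tfrac{\tau}{2}\nabla p,\; p\bigr)
\]
lies in $\widetilde{\bm{V}_h}$. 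Plugging into (\ref{eq:bilinearform}) and using $\nabla\times\nabla p=0$, all diagonal pieces $\tfrac{2}{\tau}\|\bm{B}\|^2+\tfrac{\tau}{2}\|\nabla\times\bm{E}\|^2+\tfrac{2}{\tau}\|\bm{E}\|^2+\tfrac{\tau}{2}\|\nabla p\|^2+\tfrac{2}{\tau}\|p\|^2$ (all in the appropriate $\widetilde{\mathcal{M}}_\bullet$ norms) appear, along with two leftover cross terms $\langle\bm{B},\nabla\times\bm{E}\rangle_{\widetilde{\mathcal{M}}_{\bm{B}}}$ and $\langle\bm{E},\nabla p\rangle_{\widetilde{\mathcal{M}}_{\bm{E}}}$. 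A weighted Young's inequality with the balancing weight $\tau/2$ absorbs each cross term into half of its surrounding squared terms, giving $a(\bm{B},\bm{E},p;\widetilde{\bm{C}}^{\ast},\widetilde{\bm{F}}^{\ast},\widetilde{q}^{\ast})\ge \tfrac{1}{2}\triplenorm{(\bm{B},\bm{E},p)}^{2}$ on the diagonal pieces.

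To close the inf-sup, bound the test norm by $\triplenorm{(\widetilde{\bm{C}}^{\ast},\widetilde{\bm{F}}^{\ast},\widetilde{q}^{\ast})}\le C\triplenorm{(\bm{B},\bm{E},p)}$ via the triangle inequality on the mass pieces and the two structural identities $\nabla\cdot\widetilde{\bm{C}}^{\ast}=\nabla\cdot\bm{B}$ (because $\nabla\cdot\nabla\times=0$) and $\nabla\times\widetilde{\bm{F}}^{\ast}=\nabla\times\bm{E}$ (because $\nabla\times\nabla=0$) on the differential pieces. Tracking constants through Young's inequality and this bound produces the factor $\tfrac{1}{4}$ in (\ref{eq:infsuporiginal}). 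The continuity (\ref{eq:boundednessoriginal}) is then a termwise application of the Cauchy--Schwarz inequality in the mass-lumped inner products, matching each factor against the corresponding piece of $\triplenorm{\cdot}$ and summing.

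The main obstacle is the divergence piece $\|\nabla\cdot\bm{B}\|^{2}$ sitting inside $\|\bm{B}\|_{\operatorname{div}}^{2}$: since $\nabla\cdot\bm{B}$ does not appear anywhere in $a$, it cannot be extracted by choosing a test function in the naive way. The resolution is the first identity above: the augmented test $\widetilde{\bm{C}}^{\ast}=\bm{B}+\tfrac{\tau}{2}\nabla\times\bm{E}$ has divergence \emph{exactly} equal to $\nabla\cdot\bm{B}$, so the divergence contribution enters symmetrically on both sides of the inf-sup quotient and is matched under the hypothesis $\bm{g}_{\bm{B}}\in(\bm{H}_{h,0}(\operatorname{div}))^{\prime}$, which (via the discrete Faraday identity $\tfrac{2}{\tau}\nabla\cdot\bm{B}=\nabla\cdot\bm{g}_{\bm{B}}$) ensures the divergence data is correctly normed. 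Carefully tracking this cancellation is the subtle point that converts the ``mass $+$ curl $+$ grad'' lower bound from Step~2 into the full $\triplenorm{\cdot}$ inf-sup with constant $\tfrac{1}{4}$.
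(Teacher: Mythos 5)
Your choice of test functions $(\widetilde{\bm{C}}^{\ast},\widetilde{\bm{F}}^{\ast},\widetilde{q}^{\ast})=(\bm{B}+\tfrac{\tau}{2}\nabla\times\bm{E},\,\bm{E}+\tfrac{\tau}{2}\nabla p,\,p)$, the Young's inequality step yielding the factor $\tfrac12$, the bound $\triplenorm{(\widetilde{\bm{C}}^{\ast},\widetilde{\bm{F}}^{\ast},\widetilde{q}^{\ast})}^2\le 4\triplenorm{(\bm{B},\bm{E},p)}^2$, and the Cauchy--Schwarz argument for continuity all coincide with the paper's proof. However, your handling of the $\|\nabla\cdot\bm{B}\|^2$ term --- which you correctly flag as the subtle point --- contains a genuine gap. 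You claim the divergence contribution ``enters symmetrically on both sides of the inf-sup quotient and is matched,'' but this is not so: the bilinear form $a$ contains no divergence term at all, so after substituting your test functions the numerator is exactly the ``mass $+$ curl $+$ grad'' quantity, while \emph{both} $\triplenorm{(\bm{B},\bm{E},p)}^2$ and $\triplenorm{(\widetilde{\bm{C}}^{\ast},\ldots)}^2$ contain $\|\nabla\cdot\bm{B}\|^2$ (the latter because $\nabla\cdot\widetilde{\bm{C}}^{\ast}=\nabla\cdot\bm{B}$, as you note). Taking $\bm{E}=\bm{0}$, $p=0$ and $\bm{B}$ with large divergence shows the claimed lower bound $a\ge\tfrac12\triplenorm{(\bm{B},\bm{E},p)}^2$ fails outright for such triples; there is no cancellation to track because there is nothing in the numerator to cancel against.

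The paper resolves this by introducing the augmented form $\hat{a}:=a+\langle\nabla\cdot\bm{B},\nabla\cdot\widetilde{\bm{C}}\rangle$. With your test function the added term produces exactly $\|\nabla\cdot\bm{B}\|^2$ (since $\nabla\cdot\nabla\times\bm{E}=0$), so $\hat{a}$ satisfies the inf-sup condition in the full $\triplenorm{\cdot}$ norm for \emph{all} triples. One then observes that the hypothesis on $\bm{g}_{\bm{B}}$ (divergence-free data, via the identity $\tfrac{2}{\tau}\nabla\cdot\bm{B}=\nabla\cdot\bm{g}_{\bm{B}}$ that you already wrote down) forces $\nabla\cdot\bm{B}=0$, so $\hat{a}$ and $a$ agree and the well-posedness transfers. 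Your argument needs this augmentation step (or an explicit restriction to the divergence-free subspace with the statement ``$\nabla\cdot\bm{B}=0$, hence both divergence terms vanish identically''); the vague ``matched on both sides'' claim does not substitute for it.
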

\begin{proof}
Consider a variation of (\ref{eq:bilinearform}), where the term $\langle \nabla \cdot \bm{B}, \nabla \cdot \widetilde{\bm{C}} \rangle$ is added, 
\begin{align}
 \hat{a}(\bm{B}, \bm{E}, p; \widetilde{\bm{C}},\widetilde{\bm{F}}, \widetilde{q}) \ := a(\bm{B}, \bm{E}, p; \widetilde{\bm{C}},\widetilde{\bm{F}}, \widetilde{q}) +   \langle \nabla \cdot \bm{B}, \nabla \cdot \widetilde{\bm{C}} \rangle. \label{eq:bilinearform1}
\end{align}
Since $\nabla \cdot \bm{B} = 0$ for all $t \geq 0$, (\ref{eq:bilinearform}) and (\ref{eq:bilinearform1}) are equivalent. We proceed by showing that (\ref{eq:bilinearform1}) satisfies the \textit{inf-sup} condition, 
\begin{equation}
\sup_{\bm{0} \neq (\widetilde{\bm{C}}, \widetilde{\bm{F}}, \widetilde{q}) \in \widetilde{\bm{V_h}}}	\frac{\hat{a}(\bm{B}, \bm{E}, p; \widetilde{\bm{C}}, \widetilde{\bm{F}}, \widetilde{q})}{\triplenorm{ ( \widetilde{\bm{C}}, \widetilde{\bm{F}},  \widetilde{q} \,)}} \geq \frac{1}{4} \triplenorm{ ( \bm{B}, 	\bm{E} , p )} , \label{eq:infsup}
\end{equation}
and is bounded,
{\small 
\begin{equation}
\hat{a}(\bm{B}, \bm{E}, p; \widetilde{\bm{C}}, \widetilde{\bm{F}},  \widetilde{q}) \leq C \triplenorm{ (\bm{B}, 	\bm{E} , p)} \, \triplenorm{( \widetilde{\bm{C}}, \widetilde{\bm{F}},  \widetilde{q}\,)}. \label{eq:boundedness}
\end{equation}
}
Let $\widetilde{\bm{C}} = \bm{B} + \frac{\tau}{2} \nabla \times \bm{E}$, $\widetilde{\bm{F}} = \bm{E} + \frac{\tau}{2} \nabla p$, and $ \widetilde{q} = p$. Substituting into the bilinear form (\ref{eq:bilinearform1}), using the fact that $\nabla \cdot \nabla \times \bm{E} = 0$ and $\nabla \times \nabla p = \bm 0$, and simplifying, we have
 \begin{align*}
\hat{a}(\bm{B}, \bm{E}, p; \widetilde{\bm{C}}, \widetilde{\bm{F}}, \widetilde{q}) = &\frac{2}{\tau} \|\bm{B}\|_{\widetilde{\mathcal{M}}_{\bm{B}}}^2 + \|\nabla \cdot \bm{B}\|^2 + \frac{2}{\tau}\|\bm{E}\|_{\widetilde{\mathcal{M}}_{\bm{E}}} ^2 + \frac{\tau}{2} \| \nabla \times \bm{E} \|_{\widetilde{\mathcal{M}}_{\bm{B}}}^2  \\
&+ \frac{2}{\tau} \| p \|_{\widetilde{\mathcal{M}}_{p}}^2 + \frac{\tau}{2} \| \nabla p \|_{\widetilde{\mathcal{M}}_{\bm{E}}} ^2+ \langle \nabla \times \bm{E}, \bm{B} \rangle_{\widetilde{\mathcal{M}}_{\bm{B}}} + \langle \nabla p, \bm{E} \rangle_{\widetilde{\mathcal{M}}_{\bm{E}}}.
\end{align*}
Using Young's inequality to bound the cross terms, it follows that
\begin{align*}
\hat{a}(\bm{B}, \bm{E}, p; \widetilde{\bm{C}}, \widetilde{\bm{F}}, \widetilde{q}) \ \geq \ &\frac{2}{\tau} \|\bm{B}\|_{\widetilde{\mathcal{M}}_{\bm{B}}}^2 + \|\nabla \cdot \bm{B}\|^2 + \frac{2}{\tau}\|\bm{E}\|_{\widetilde{\mathcal{M}}_{\bm{E}}} ^2 + \frac{\tau}{2} \| \nabla \times \bm{E} \|_{\widetilde{\mathcal{M}}_{\bm{B}}}^2 \\
& + \frac{2}{\tau} \| p \|_{\widetilde{\mathcal{M}}_{p}}^2 + \frac{\tau}{2} \| \nabla p \|_{\widetilde{\mathcal{M}}_{\bm{E}}} ^2  - \frac{\tau}{4} \|\nabla \times \bm{E}\|_{\widetilde{\mathcal{M}}_{\bm{B}}}^2 - \frac{1}{\tau} \|\bm{B}\|_{\widetilde{\mathcal{M}}_{\bm{B}}}^2 \\
&-\frac{1}{\tau} \|\bm{E}\|_{\widetilde{\mathcal{M}}_{\bm{E}}}^2 - \frac{\tau}{4}\| \nabla p \|_{\widetilde{\mathcal{M}}_{\bm{E}}} ^2\\
\ = \ &\frac{1}{\tau} \|\bm{B}\|_{\widetilde{\mathcal{M}}_{\bm{B}}}^2 + \|\nabla \cdot \bm{B}\|^2 + \frac{1}{\tau}\|\bm{E}\|_{\widetilde{\mathcal{M}}_{\bm{E}}} ^2 + \frac{\tau}{4} \| \nabla \times \bm{E} \|_{\widetilde{\mathcal{M}}_{\bm{B}}}^2  \\
&+ \frac{2}{\tau} \| p \|_{\widetilde{\mathcal{M}}_{p}}^2 + \frac{\tau}{4} \| \nabla p \|_{\widetilde{\mathcal{M}}_{\bm{E}}} ^2\\
\ \geq \ & \frac{1}{2} \triplenorm{( \bm{B}, 	\bm{E} , p)} ^2.
\end{align*}
Next, bound the norms of the test functions using the triangle inequality and Young's inequality.
{\small 
\begin{align*}
\triplenorm{ ( \widetilde{\bm{C}}, \widetilde{\bm{F}},  \widetilde{q} \,)}^2 \ = \ & \|\bm{B} + \frac{\tau}{2} \nabla \times \bm{E}\|_{\operatorname{div}}^2 + \|\bm{E} + \frac{\tau}{2} \nabla p\|_{\operatorname{curl}}^2 + \| p\|_{\operatorname{grad}}^2 \\
\ \leq \ & \| \bm{B}\|_{\operatorname{div}}^2 + \frac{\tau ^2}{4} \| \nabla \times \bm{E} \|_{\operatorname{div}}^2 + \tau \| \bm{B} \|_{\operatorname{div}} \| \nabla \times \bm{E} \|_{\operatorname{div}} \\
 &+ \| \bm{E} \|_{\operatorname{curl}}^2 + \frac{\tau ^2}{4} \| \nabla p \|_{\operatorname{curl}}^2 + \tau \|\bm{E} \|_{\operatorname{curl}} \| \nabla p \|_{\operatorname{curl}}  + \| p\|_{\operatorname{grad}}^2  \\
 \leq \ &
\| \bm{B}\|_{\operatorname{div}}^2 + \frac{\tau ^2}{4} \| \nabla \times \bm{E} \|_{\operatorname{div}}^2 + \| \bm{B}\|_{\operatorname{div}}^2 + \frac{\tau ^2}{4} \| \nabla \times \bm{E}\|_{\operatorname{div}}^2 \\
+&\| \bm{E} \|_{\operatorname{curl}}^2 + \frac{\tau ^2}{4} \| \nabla p \|_{\operatorname{curl}}^2 + \| \bm{E} \|_{\operatorname{curl}}^2 + \frac{\tau ^2}{4} \| \nabla p\|_{\operatorname{curl}}^2  + \| p\|_{\operatorname{grad}}^2 \\
\ \leq \ &4 \triplenorm{ (\bm{B}, 	\bm{E} , p)} .
\end{align*}
}
Weak coercivity, \eqref{eq:infsup}, follows directly. To show boundedness, \eqref{eq:boundedness}, apply Cauchy--Schwarz twice to \eqref{eq:bilinearform1}. The well-posedness of bilinear form $\hat{a}$ defined in \eqref{eq:bilinearform1} follows directly from Babuska--Brezzi theory. Since $\hat{a}$, (\ref{eq:bilinearform1}), and the original bilinear form $a$, (\ref{eq:bilinearform}), are equivalent, the scaled, mass-lumped FE system, which is equivalent to the MFD system, (\ref{eq:lumpedFaraday})--(\ref{eq:lumpedGauss}) is well-posed (similar arguments as in Lemma 1 and Theorem 8 of~\cite{huStableFiniteElement2017} give the result). This implies that (\ref{eq:bilinearform}) satisfies (\ref{eq:infsuporiginal})--(\ref{eq:boundednessoriginal}), which completes the proof. 
\end{proof}

\section{Block Preconditioners based on Exact Block Factorization} \label{sec:precond}
One of the benefits of drawing connections between MFD and FEM, in addition to the ability to show well-posedness of the MFD discretization, is that robust linear solvers developed for FEM \cite{precond2, precond1, precond3, precond4, precond5} can now be applied to the MFD system. In this work, we extend the ideas from \cite{jxsolver, precond4} to the MFD system and develop robust preconditioners based on block factorization, exploiting the structure-preserving nature of the discretization. 

 Good block preconditioners are often based on Schur complements and their approximations, and the accuracy of the approximations greatly influences the performance of the preconditioner. However, the structure-preserving discretization allows for Schur complements to be computed exactly, and the exact sequence of the discrete spaces yields sparse Schur complements that are used directly without approximation.

 To more clearly exploit the structure-preserving nature of the MFD discretization, we re-write the blocks of (\ref{eq:MFDLinear}) back in the MFD operator notation given by (\ref{eq:graddivcurlD})--(\ref{eq:graddivcurlV}),
\begin{equation}
\mathcal{A}_{\text{MFD}} ~\bm{x} = \bm{b} \iff  \begin{bmatrix}
 \frac{2}{\tau} \mathcal{I}_{\bm{e}^V} & \text{curl}_D  &    \\
 - \text{curl}_V   &  \frac{2}{\tau} \mathcal{I}_{\bm{e}^D}  & \text{grad}_D \\
  	& -\text{div}_D & \frac{2}{\tau} \mathcal{I}_V 
 \end{bmatrix}
 \begin{bmatrix}
 \bm{B}_V^n \\
 \bm{E}_D^n \\
 p_D^n
 \end{bmatrix}
 =
  \begin{bmatrix}
 \bm{g}^V_{\bm{B}} \\
\bm{g}^D_{\bm{E}} \\
 g^D_p
 \end{bmatrix}. \label{eq:simpMFD}
\end{equation}
 Recall that the structure-preserving discretization enforces the properties of the gradient, curl, and divergence (curl grad = 0 and div curl = 0) on the discrete level as $\text{curl}_D \text{grad}_D = 0$, $\text{curl}_V \text{grad}_V = 0$, $\text{div}_V \text{curl}_D = 0$, and $\text{div}_D \text{curl}_V = 0$. Exploiting these properties gives the exact block factorization of (\ref{eq:MFDLinear}), 
\begin{equation}
\mathcal{A}_{\text{MFD}} = \underbrace{\begin{bmatrix}
\mathcal{I}_{\bm{e}^V} &  & \\
-\frac{\tau}{2} \text{curl}_V & \mathcal{I}_{\bm{e}^D} & \\
& -\frac{\tau}{2}\text{div}_D & \mathcal{I}_V
\end{bmatrix}}_{\mathcal{L}}\underbrace{\begin{bmatrix}
\frac{2}{\tau}\mathcal{I}_{\bm{e}^V} & & \\
& \mathcal{S}_{\bm{E}} & \\
& & \mathcal{S}_p
\end{bmatrix}}_{\mathcal{S}}\underbrace{\begin{bmatrix}
\mathcal{I}_{\bm{e}^V} & \frac{\tau}{2} \text{curl}_D & \\
& \mathcal{I}_{\bm{e}^D} & \frac{\tau}{2} \text{grad}_D \\
& & \mathcal{I}_V
\end{bmatrix}}_{\mathcal{U}}, \label{eq:factorization}
\end{equation}
with the Schur complements computed exactly as,
\begin{align*}
\mathcal{S}_{\bm{E}} &= \frac{\tau}{2} \text{curl}_V \text{curl}_D + \frac{2}{\tau} \mathcal{I}_{\bm{e}^D}, &
\mathcal{S}_p &= \frac{\tau}{2} \text{div}_D\text{grad}_D + \frac{2}{\tau} \mathcal{I}_V.
\end{align*}
For the remainder of the paper, we drop the subscript notation and just represent $\mathcal{A}_{\text{MFD}}$ by $\mathcal{A}$.

Several block preconditioners can be designed from the exact factorization, (\ref{eq:factorization}). A natural choice of preconditioner is $\mathcal{S}^{-1}$. However, this involves computing the inverse of the Schur complements, making this choice impractical. To rectify this, replace the Schur complements, $\mathcal{S}_{\bm{E}}$ and $\mathcal{S}_p$, with good preconditioners, $\mathcal{Q}_{\bm{E}}$ and $\mathcal{Q}_p$. For example, an HX-preconditioner~\cite{HiptmairXu2007b} can be used for $\mathcal{Q}_{\bm{E}}$ and a standard multigrid preconditioner for $\mathcal{Q}_p$.  Note that the top left entry in $\mathcal{S}$ is a scaled identity matrix, so no spectrally-equivalent approximation is needed. For the remaining two blocks, we assume that
\begin{align}
 c_{1, \bm{E}} \leq  \lambda( \mathcal{Q}_{\bm{E}}\mathcal{S}_{\bm{E}})  \leq c_{2, \bm{E}}, \label{eq:specequivE} \\
c_{1,p} \leq \lambda(\mathcal{Q}_{p}\mathcal{S}_{p}) \leq c_{2, p}.  \label{eq:specequivp}
\end{align}
This implies that for $\mathcal{Q} = \text{diag}\left( \left( \frac{2}{\tau}\mathcal{I}_{\bm{e}^V} \right)^{-1},\mathcal{Q}_{\bm{E}}, \mathcal{Q}_p \right)$, we have 
\begin{equation*}
c_1 \leq \lambda(\mathcal{Q} \mathcal{S}) \leq c_2,
\end{equation*}
where $c_1 = \text{min}( c_{1,\bm{E}}, c_{1,p})$ and $c_2 = \text{max}( c_{2,\bm{E}}, c_{2,p})$. 

From this factorization, we consider three different block preconditioners,   
\begin{equation}
\mathcal{X}_{\mathcal{LS}} := \mathcal{Q} \mathcal{L}^{-1}, \quad \mathcal{X}_{\mathcal{SU}}:= \mathcal{U}^{-1} \mathcal{Q}, \quad \mathcal{X}_{\mathcal{LSU}} := \mathcal{U}^{-1} \mathcal{Q} \mathcal{L}^{-1}, \label{eq:precond}
\end{equation}
where $\mathcal{L}^{-1}$ and $\mathcal{U}^{-1}$ can be computed directly,
\begin{align*}
\mathcal{L}^{-1} &= \begin{bmatrix}
\mathcal{I}_{\bm{e}^V} & & \\
\frac{\tau}{2} \text{curl}_V & \mathcal{I}_{\bm{e}^D} & \\
& \frac{\tau}{2} \text{div}_D & \mathcal{I}_V
\end{bmatrix}, &
\mathcal{U}^{-1} &= \begin{bmatrix}
\mathcal{I}_{\bm{e}^V} & -\frac{\tau}{2} \text{curl}_D & \\
& \mathcal{I}_{\bm{e}^D} & -\frac{\tau}{2} \text{grad}_D \\
& & \mathcal{I}_V
\end{bmatrix}.
\end{align*}
 In the following theorem, we prove that these preconditioners are robust with respect to the discretization parameters. By bounding the eigenvalues for the preconditioned system, we guarantee good performance of GMRES. Note that the following proof can be done to show that the constants $c_1$ and $c_2$ are also independent of the PDE parameters, $\epsilon$ and $\mu$, but for simplicity we only consider the case where $\epsilon = \mu = 1$. Otherwise, the identity matrices in the diagonal matrix of the decomposition would be scaled by the PDE parameter values.
\begin{theorem}
Let $\mathcal{X}_{\mathcal{LS}}$, $\mathcal{X}_{\mathcal{SU}}$, and $\mathcal{X}_{\mathcal{LSU}}$ be defined by (\ref{eq:precond}) and assume the spectral equivalent properties (\ref{eq:specequivE})--(\ref{eq:specequivp}) hold. Then, 
\begin{equation}
\lambda\left(\mathcal{X}_{\mathcal{LS}}\mathcal{A}\right) \in [c_1, c_2], \quad \lambda\left(\mathcal{X}_{\mathcal{SU}} \mathcal{A}\right) \in [c_1, c_2], \quad \lambda\left(\mathcal{X}_{\mathcal{LSU}} \mathcal{A} \right) \in [c_1, c_2],
\end{equation}
where $c_1 = \textup{min} \left(c_{1,\bm{E}}, c_{1,p} \right)$ and $c_2 = \textup{max}\left( c_{2,\bm{E}}, c_{2,p}\right)$ are constants that do not depend on discretization parameters, $h$ and $\tau$.

\end{theorem}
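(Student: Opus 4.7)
The plan is to exploit the exact block factorization $\mathcal{A}=\mathcal{L}\mathcal{S}\mathcal{U}$ in \eqref{eq:factorization} and to observe that $\mathcal{Q}\mathcal{S}$ is block diagonal, with top-left block $(\tfrac{\tau}{2}\mathcal{I})(\tfrac{2}{\tau}\mathcal{I}) = \mathcal{I}$ and the other two diagonal blocks being exactly $\mathcal{Q}_{\bm{E}}\mathcal{S}_{\bm{E}}$ and $\mathcal{Q}_p\mathcal{S}_p$. In particular,
\begin{equation*}
\sigma(\mathcal{Q}\mathcal{S}) \;=\; \{1\}\cup \sigma(\mathcal{Q}_{\bm{E}}\mathcal{S}_{\bm{E}})\cup \sigma(\mathcal{Q}_p\mathcal{S}_p) \;\subset\; [c_1,c_2],
\end{equation*}
using the spectral equivalence hypotheses \eqref{eq:specequivE}--\eqref{eq:specequivp} (together with the natural assumption $c_{1,\bullet}\le 1\le c_{2,\bullet}$, implicit in the word ``good''). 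Each of the three preconditioned operators is then shown to have exactly the same spectrum as $\mathcal{Q}\mathcal{S}$.

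For $\mathcal{X}_{\mathcal{LSU}}\mathcal{A}$ I would substitute the factorization directly: $\mathcal{X}_{\mathcal{LSU}}\mathcal{A} = \mathcal{U}^{-1}\mathcal{Q}\mathcal{L}^{-1}\mathcal{L}\mathcal{S}\mathcal{U} = \mathcal{U}^{-1}(\mathcal{Q}\mathcal{S})\mathcal{U}$, which is a similarity transformation, so $\sigma(\mathcal{X}_{\mathcal{LSU}}\mathcal{A}) = \sigma(\mathcal{Q}\mathcal{S}) \subset [c_1,c_2]$. For $\mathcal{X}_{\mathcal{LS}}\mathcal{A}$ the same substitution gives $\mathcal{X}_{\mathcal{LS}}\mathcal{A} = \mathcal{Q}\mathcal{S}\mathcal{U}$. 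Since $\mathcal{Q}\mathcal{S}$ is block diagonal and $\mathcal{U}$ is unit block upper triangular, the product is block upper triangular with diagonal blocks $\mathcal{I}$, $\mathcal{Q}_{\bm{E}}\mathcal{S}_{\bm{E}}$, $\mathcal{Q}_p\mathcal{S}_p$, and the spectrum of a block triangular matrix is the union of the spectra of its diagonal blocks, giving the same bound.

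The case $\mathcal{X}_{\mathcal{SU}}\mathcal{A}$ is the least immediate because the two ``outer'' factors are no longer identity–cancelling: $\mathcal{X}_{\mathcal{SU}}\mathcal{A} = \mathcal{U}^{-1}\mathcal{Q}\mathcal{L}\mathcal{S}\mathcal{U}$. Here I would use the standard identity $\sigma(XY) = \sigma(YX)$ with $X=\mathcal{U}^{-1}$ and $Y=\mathcal{Q}\mathcal{L}\mathcal{S}\mathcal{U}$ to cycle $\mathcal{U}$ around, collapsing $\mathcal{U}\mathcal{U}^{-1}$ and yielding $\sigma(\mathcal{X}_{\mathcal{SU}}\mathcal{A}) = \sigma(\mathcal{Q}\mathcal{L}\mathcal{S})$. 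Since $\mathcal{L}\mathcal{S}$ is block lower triangular and $\mathcal{Q}$ is block diagonal, $\mathcal{Q}\mathcal{L}\mathcal{S}$ is block lower triangular with the same diagonal blocks $\mathcal{I}$, $\mathcal{Q}_{\bm{E}}\mathcal{S}_{\bm{E}}$, $\mathcal{Q}_p\mathcal{S}_p$, closing the argument.

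The proof is thus essentially algebraic and relies on: (i) $\mathcal{Q}\mathcal{S}$ being exactly block diagonal (forced by taking $\mathcal{Q}$'s first block to be the exact inverse $\tfrac{\tau}{2}\mathcal{I}$), (ii) $\mathcal{L}$, $\mathcal{U}$ being unit block triangular, and (iii) the structure-preserving identities $\mathrm{curl}_V\,\mathrm{grad}_V=0$ and $\mathrm{div}_V\,\mathrm{curl}_D=0$ already used to write $\mathcal{A}=\mathcal{L}\mathcal{S}\mathcal{U}$ exactly. The only ``hard'' point is the $\mathcal{X}_{\mathcal{SU}}$ case, but the spectral cycling trick handles it without any new estimate; $h$- and $\tau$-independence is automatic because no inequality beyond \eqref{eq:specequivE}--\eqref{eq:specequivp} is invoked.
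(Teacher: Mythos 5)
Your proposal is correct and follows essentially the same route as the paper: cancel factors of the exact factorization $\mathcal{A}=\mathcal{L}\mathcal{S}\mathcal{U}$ against the preconditioner, reduce each case to a block triangular (or block diagonal) operator with diagonal blocks $\mathcal{I}$, $\mathcal{Q}_{\bm{E}}\mathcal{S}_{\bm{E}}$, $\mathcal{Q}_p\mathcal{S}_p$, and invoke \eqref{eq:specequivE}--\eqref{eq:specequivp}. Your handling of the $\mathcal{X}_{\mathcal{SU}}$ case via $\sigma(XY)=\sigma(YX)$ is just a repackaging of the paper's change of variable $\bm{y}=\mathcal{U}\bm{x}$ in the generalized eigenvalue problem, and your explicit remark that $c_{1,\bullet}\le 1\le c_{2,\bullet}$ is needed for the identity block is a point the paper leaves implicit.
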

\begin{proof}
First, consider $\mathcal{X}_{\mathcal{LS}} \mathcal{A}$,
\begin{equation*}
\mathcal{X}_{\mathcal{LS}} \mathcal{A} = \mathcal{Q} \mathcal{L}^{-1} \mathcal{LSU} = \mathcal{Q} \mathcal{SU} = \begin{bmatrix}
\mathcal{I}_{\bm{e}^V} & \frac{\tau}{2} \text{curl}_D & \\
& \mathcal{Q}_{\bm{E}} \mathcal{S}_{\bm{E}} & \mathcal{Q}_{\bm{E}}\text{grad}_D \\
& & \mathcal{Q}_p \mathcal{S}_p
\end{bmatrix}.
\end{equation*}
The eigenvalues $\lambda\left(\mathcal{X}_{\mathcal{LS}} {\mathcal{A}} \right)$ are determined by the eigenvalues of the diagonal blocks since $\mathcal{X}_{\mathcal{LS}}\mathcal{A}$ is block upper triangular. The first block is an identity matrix, whose eigenvalues are all ones. For the other two blocks, we use \eqref{eq:specequivE}--\eqref{eq:specequivp} to bound the eigenvalues and, overall, have~$\lambda\left(\mathcal{X}_{\mathcal{LS}}\mathcal{A}\right) \in [c_1, c_2]$.

To bound the eigenvalues of $\mathcal{X}_{\mathcal{SU}} \mathcal{A}$, consider the eigenvalue problem,
\begin{equation*}
\mathcal{X}_{\mathcal{SU}} \mathcal{A} \bm{x} = \lambda \bm{x} \iff \mathcal{A} \bm{x} = \lambda \mathcal{X}_{\mathcal{SU}}^{-1} \bm{x}.
\end{equation*}
By substituting in the decompositions of $\mathcal{A}$ and $\mathcal{X}_{\mathcal{SU}}$, it follows that,
\begin{equation*}
\mathcal{LSU} \bm{x} = \lambda \mathcal{Q}^{-1} \mathcal{U} \bm{x}.
\end{equation*}
Let $\bm{y} = \mathcal{U} \bm{x}$ and left multiply by $\mathcal{Q}$. Then,
$\mathcal{Q} \mathcal{L} {\mathcal{S}} \bm{y} = \lambda \bm{y}$,
where
\begin{equation*}
\mathcal{Q} \mathcal{L}{\mathcal{S}}  = \begin{bmatrix}
\mathcal{I}_{\bm{e}^V} &  & \\
-\mathcal{Q}_{\bm{E}} \text{curl}_V & \mathcal{Q}_{\bm{E}} \mathcal{S}_{\bm{E}} & \\
&  {- \mathcal{Q}_p \text{div}_D} & \mathcal{Q}_p \mathcal{S}_p
\end{bmatrix}.
\end{equation*}
Since $\mathcal{Q} \mathcal{L}{\mathcal{S}} $ is block lower triangular, again the eigenvalues only depend on the diagonal blocks. Thus, by (\ref{eq:specequivE})--(\ref{eq:specequivp}), $\lambda\left(\mathcal{X}_{\mathcal{SU}}\mathcal{A}\right) \in [c_1, c_2].$

Finally, consider the preconditioned system
\begin{equation*}
\mathcal{X}_{\mathcal{LSU}} \mathcal{A} \bm{x} = \mathcal{U}^{-1} Q \mathcal{L}^{-1} \mathcal{L} \mathcal{S} \mathcal{U} \bm{x}  = \mathcal{U}^{-1} Q  \mathcal{S} \mathcal{U} \bm{x} = \lambda \bm{x}.
\end{equation*}
Left multiplying by $\mathcal{U}$ and letting $\bm{y} = \mathcal{U} \bm{x}$ yields,
\begin{equation*}
Q \mathcal{S} \bm{y} = \lambda \bm{y},
\end{equation*}
where
\begin{equation*}
Q\mathcal{S} = \begin{bmatrix}
\mathcal{I}_{\bm{e}^V} & & \\
& \mathcal{Q}_{\bm{E}} \mathcal{S}_{\bm{E}} & \\
& & \mathcal{Q}_p \mathcal{S}_p
\end{bmatrix}.
\end{equation*}
By the same reasoning as the other two cases, we conclude $\lambda\left(\mathcal{X}_{\mathcal{LSU}}\mathcal{A}\right) \in [c_1, c_2]$.
\end{proof}

\subsection{Preservation of the Divergence-Free Magnetic Field}
The goal of using a structure-preserving discretization for the Maxwell system is to enforce the PDE constraints on the discrete level. However, even if the discretization holds, the approximate solve for each time step of a linear solver can destroy these properties. The following theorem ensures that at each iteration of the linear solver, the divergence-free condition for $\bm{B}$ is preserved. 
\begin{theorem}
Let $\bm{x}^0 = \left( \bm{B}_V^0, \bm{E}_D^0, p_D^0 \right)^T$ be the initial guess for the MFD system satisfying $\operatorname{div}_V \bm{B}_V^0 = 0$, and let $\bm{b} = \left( \bm{g}_{\bm{B}}^V, \bm{g}_{\bm{E}}^D, g_p^D \right)^T$ be the MFD right-hand side satisfying $\operatorname{div}_V  \bm{g}_{\bm{B}}^V = 0$. Then all iterations, $\bm{x}^k = \left( \bm{B}_V^k, \bm{E}_D^k, p_D^k \right)^T$, of the preconditioned GMRES method satisfy $\operatorname{div}_V \bm{B}_V^k = 0$, where $\mathcal{X}$ is any of the preconditioners defined in (\ref{eq:precond}).  
\end{theorem}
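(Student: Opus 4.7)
The plan is to exploit the structure-preserving identity $\operatorname{div}_V \operatorname{curl}_D = 0$, together with the explicit triangular factorizations of $\mathcal{L}^{-1}$, $\mathcal{U}^{-1}$, and the block-diagonal form of $\mathcal{Q}$, to show that preserving $\operatorname{div}_V$ of the first component is an invariant of every operation used to build the Krylov iterates. I will work with left-preconditioned GMRES, where the $k$th iterate has the form $\bm{x}^k = \bm{x}^0 + \bm{\delta}^k$ with $\bm{\delta}^k \in \mathcal{K}_k(\mathcal{X}\mathcal{A}, \mathcal{X}\bm{r}^0)$ and $\bm{r}^0 = \bm{b} - \mathcal{A}\bm{x}^0$; the right-preconditioned case reduces to the same analysis by writing $\bm{x}^k = \mathcal{X}\bm{y}^k$.

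The core of the proof is a single invariance lemma: let $\bm{v} = (\bm{v}_1, \bm{v}_2, \bm{v}_3)^T$ be any grid-function triple with $\operatorname{div}_V \bm{v}_1 = 0$; then both $\mathcal{A}\bm{v}$ and $\mathcal{X}\bm{v}$ (for each of the three preconditioners in \eqref{eq:precond}) also have first block annihilated by $\operatorname{div}_V$. For $\mathcal{A}\bm{v}$, the first block is $\tfrac{2}{\tau}\bm{v}_1 + \operatorname{curl}_D \bm{v}_2$, whose image under $\operatorname{div}_V$ vanishes thanks to $\operatorname{div}_V \operatorname{curl}_D = 0$. For $\mathcal{L}^{-1}$, the first block is left untouched. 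For $\mathcal{Q}$, the first block is scaled by $\tfrac{\tau}{2}\mathcal{I}_{\bm{e}^V}$ and hence preserves zero $\operatorname{div}_V$. For $\mathcal{U}^{-1}$, the first block becomes $\bm{v}_1 - \tfrac{\tau}{2}\operatorname{curl}_D(\cdot)$, whose divergence again vanishes by the same identity. Composing these establishes invariance under $\mathcal{X}_{\mathcal{LS}}$, $\mathcal{X}_{\mathcal{SU}}$, and $\mathcal{X}_{\mathcal{LSU}}$.

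With the invariance in hand, the rest is immediate. The hypotheses $\operatorname{div}_V \bm{B}_V^0 = 0$ and $\operatorname{div}_V \bm{g}_{\bm{B}}^V = 0$ imply that the first block of $\bm{r}^0$ satisfies $\operatorname{div}_V(\bm{g}_{\bm{B}}^V - \tfrac{2}{\tau}\bm{B}_V^0 - \operatorname{curl}_D \bm{E}_D^0) = 0$. Applying $\mathcal{X}$ once, and then $\mathcal{X}\mathcal{A}$ repeatedly, keeps the zero-divergence property of the first block at every step by induction, so every basis vector of $\mathcal{K}_k(\mathcal{X}\mathcal{A}, \mathcal{X}\bm{r}^0)$ has this property. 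Since the set of triples whose first block has $\operatorname{div}_V = 0$ is a linear subspace, $\bm{\delta}^k$ lies in it, and therefore $\operatorname{div}_V \bm{B}_V^k = \operatorname{div}_V \bm{B}_V^0 + \operatorname{div}_V(\bm{\delta}^k)_1 = 0$.

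There is no real obstacle beyond careful bookkeeping; the only subtle point is to make sure the invariance argument covers \emph{both} the triangular factors (which apply $\operatorname{curl}_D$, $\operatorname{curl}_V$, $\operatorname{grad}_D$, $\operatorname{div}_D$ in various positions) and the block-diagonal factor $\mathcal{Q}$, whose $(1,1)$ block is a scaled identity so that it never mixes components. The key identity $\operatorname{div}_V \operatorname{curl}_D = 0$ from \eqref{eq:MFDderham1} does all the work.
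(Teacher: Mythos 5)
Your proposal is correct and follows essentially the same route as the paper: both arguments rest on the identity $\operatorname{div}_V\operatorname{curl}_D=0$ and an induction over the Krylov basis vectors $(\mathcal{X}\mathcal{A})^m\bm{r}^0$, concluding by linearity of the divergence-free subspace. The only difference is organizational: you verify the invariance factor-by-factor for $\mathcal{A}$, $\mathcal{L}^{-1}$, $\mathcal{Q}$, and $\mathcal{U}^{-1}$ and then compose, whereas the paper writes out the first block of $\mathcal{X}\mathcal{A}\bm{v}^{m-1}$ explicitly for each of the three preconditioners; your modular version reaches the same conclusion with less computation.
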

\begin{proof}
Define the Krylov subspace,
\begin{equation*}
K^k(\mathcal{X} \mathcal{A}, \bm{r}^0) = \text{span} \{ \bm{r}^0, \mathcal{X}\mathcal{A} \bm{r}^0, \left( \mathcal{X} \mathcal{A} \right)^2 \bm{r}^0, ..., \left( \mathcal{X} \mathcal{A} \right)^k \bm{r}^0 \} 
\end{equation*}
with $\bm{r}^0 = \left( \bm{r}_{\bm{B}}^0, \bm{r}_{\bm{E}}^0, \bm{r}_p^0 \right)^T := \mathcal{X} \left( \bm{b} - \mathcal{A} \bm{x}^0 \right)$ such that for each iteration,
\begin{equation}
\bm{x}^k \in \bm{x}^0 + K^k \left( \mathcal{X} \mathcal{A}, \bm{r}^0 \right). \label{eq:xk}
\end{equation}
By the assumption of the divergence-free initial data, it follows that $\text{div}_V\bm{r}_{\bm{B}}^0 = 0$.
Next, let $\bm{v}^m = \left( \bm{v}_{\bm{B}}^m, \bm{v}_{\bm{E}}^m, \bm{v}_p^m \right)^T := \left( \mathcal{X}\mathcal{A} \right)^m \bm{r}^0$ for $m = 0,1,...,k-1$. Since $\bm{v}^m = \mathcal{X} \mathcal{A} \bm{v}^{m-1}$, we compute $\bm{v}_{\bm{B}}^m$ for each of the preconditioners in (\ref{eq:precond}). Setting $\mathcal{X} = \mathcal{X}_{\mathcal{LS}}$,
\begin{equation*}
\bm{v}_{\bm{B}}^m = \left( \frac{2}{\tau} \mathcal{I}_{\bm{e}^V} \right)^{-1} \left( \frac{2}{\tau}  \bm{v}_{\bm{B}}^{m-1} + \text{curl}_D \bm{v}_{\bm{E}}^{m-1} \right) = \bm{v}_{\bm{B}}^{m-1} + \frac{\tau}{2} \text{curl}_D \bm{v}_{\bm{E}}^{m-1}.
\end{equation*} 
For $\mathcal{X} = \mathcal{X}_{\mathcal{SU}}$,
\begin{align*}
\bm{v}_{\bm{B}}^m 
&= \bm{v}_{\bm{B}}^{m-1} + \frac{\tau}{2} \text{curl}_D \left( \bm{v}_{\bm{E}}^{m-1} + \mathcal{Q}_{\bm{E}} \text{curl}_V \bm{v}_{\bm{B}}^{m-1} - \frac{2}{\tau} \mathcal{Q}_{\bm{E}} \bm{v}_{\bm{E}}^{m-1} - \mathcal{Q}_{\bm{E}}\text{grad}_D \bm{v}_p^{m-1} \right),
\end{align*}
and for $\mathcal{X} = \mathcal{X}_{\mathcal{LSU}}$,
\begin{align*}
\bm{v}_{\bm{B}}^{m} 
=& ~\bm{v}_{\bm{B}}^{m-1}  \\
\ & \ + \text{curl}_D \big( \frac{\tau}{2} \bm{v}_{\bm{E}}^{m-1} - \frac{\tau^2}{4} \mathcal{Q}_{\bm{E}} \text{curl}_V \text{curl}_D \bm{v}_{\bm{E}}^{m-1} - \mathcal{Q}_{\bm{E}} \bm{v}_{\bm{E}}^{m-1} - \frac{\tau}{2} \mathcal{Q}_{\bm{E}} \text{grad}_D \bm{v}_p^{m-1} \big).
\end{align*}

Applying $\text{div}_V$ to $\bm{v}_{\bm{B}}^m$ for each preconditioner above, all terms with $\text{curl}_D$ in front are zero since $\text{div}_V \text{curl}_D = 0$. Then, $\text{div}_V \bm{v}_{\bm{B}}^{m} = 0$ if $\text{div}_V \bm{v}_{\bm{B}}^{m-1} = 0$. By an inductive argument, since $\text{div}_V \bm{r}_{\bm{B}}^0 = 0 $, we have that $\text{div}_V \bm{v}_{\bm{B}}^m = 0$. 
By (\ref{eq:xk}) and the definition of $\bm{v}^m$, $\bm{x}^k$ is a linear combination of $\bm{v}^m$, $m = 0, 1, ..., k-1$ . This implies that $\bm{B}_V^k$ is a linear combination of $\bm{v}_{\bm{B}}^m$. Since $\text{div}_V \bm{v}_{\bm{B}}^m = 0$, then $\text{div}_V \bm{B}_V^k = 0$ for all $k$. 
\end{proof}

\section{Numerical Results}\label{sec:num}
To demonstrate the theoretical results presented in the previous sections, consider the following test problem with essential Dirichlet boundary conditions,
{\small
\begin{align}
\bm E (\bm x, t) &= \frac{1}{\pi} e^{-t} \begin{bmatrix}
- \cos(\pi x_1) \sin( \pi x_2) \sin(\pi x_3) \\
\sin( \pi x_1) \cos( \pi x_2) \sin(\pi x_3) \\
0
\end{bmatrix}, \label{eq:testE} \\ 
\bm B(\bm x, t)&= e^{-t} \begin{bmatrix}
-\sin(\pi x_1) \cos(\pi x_2) \cos(\pi x_3) \\
-\cos(\pi x_1) \sin(\pi x_2) \cos(\pi x_3) \\
2 \cos(\pi x_1) \cos(\pi x_2) \sin(\pi x_3)
\end{bmatrix}, \label{eq:testB} \\ 
p(\bm x, t) &= 0, \label{eq:testp}\\ 
\bm j(\bm x, t) &= -e^{-t}\left( \frac{1}{\pi} + 3 \pi \right) \begin{bmatrix}
\cos (\pi x_1) \sin(\pi x_2) \sin(\pi x_3) \\
- \sin( \pi x_1) \cos( \pi x_2) \sin( \pi x_3) \\ 0
\end{bmatrix}. \label{eq:testj}
\end{align}}

While the analysis presented in this paper holds for a general Voronoi mesh, for simplicity, we consider a non-degenerate mesh, where the Voronoi points do not lie on the boundary or outside of the corresponding Delaunay tetrahedra. To get a non-degenerate Voronoi mesh, we design a Delaunay triangulation for the FE domain that consists of a cube with a rectangular pyramid on each face (see Figure~\ref{fig:mesh}).  
The pyramids are defined by the faces of the unit cube and the points $\left(- \frac{1}{2},  \frac{1}{2},  \frac{1}{2} \right)$,  $\left( \frac{3}{2},  \frac{1}{2},  \frac{1}{2} \right)$, $\left(\frac{1}{2},  -\frac{1}{2},  \frac{1}{2} \right)$, $\left(\frac{1}{2},  \frac{3}{2},  \frac{1}{2} \right)$, $\left(\frac{1}{2},  \frac{1}{2},  -\frac{1}{2} \right)$, $\left(\frac{1}{2},  \frac{1}{2},  \frac{3}{2} \right)$.  Uniform refinement is used to get more resolved meshes and Table \ref{table:mesh} lists the geometric information for the different resolutions considered. Numerical experiments are implemented in the 
HAZmath package~\cite{hazmath} written by the authors. All timed numerical results are done using a
workstation with an 8-core 3-GHz Intel Xeon Sandy Bridge CPU and 256 GB of RAM.

\begin{figure}[H]
	\centering
	\includegraphics[scale=.23]{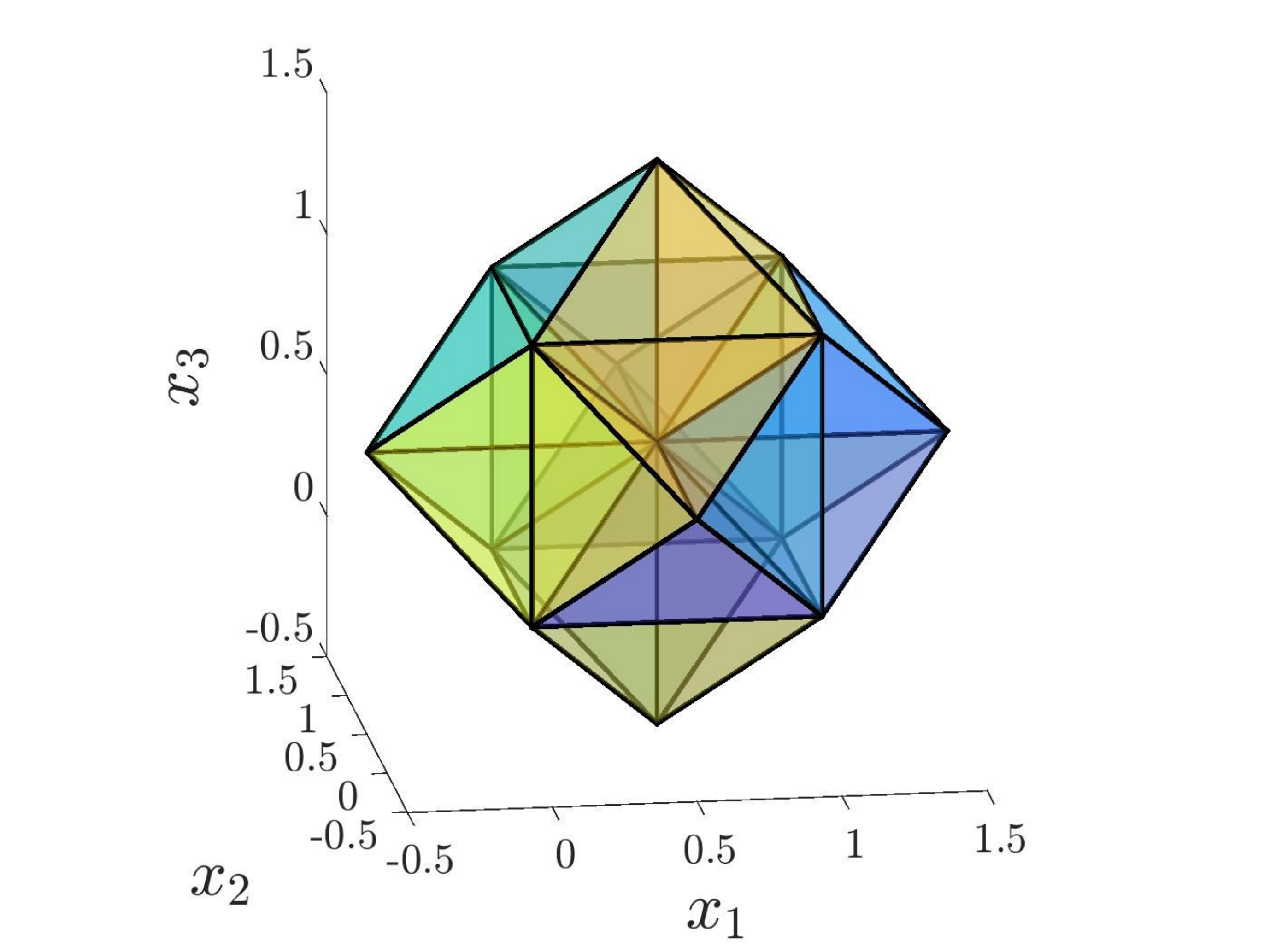}  \qquad
	\includegraphics[scale=.2]{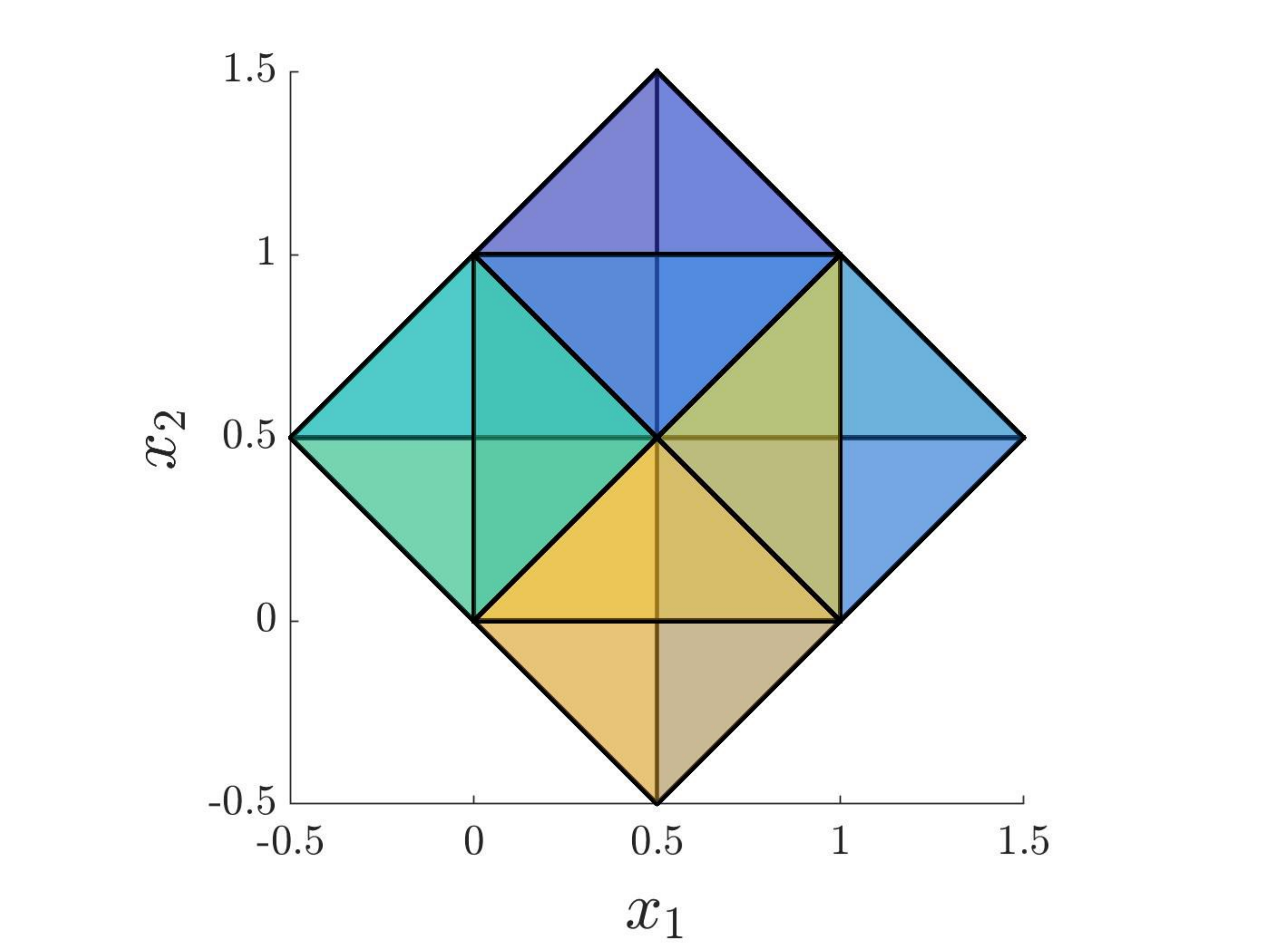} 
	\caption{Left: Delaunay mesh with $h = 1$; Right: cross section at $z=1/2$} \label{fig:mesh}
\end{figure}

\begin{table}[H]
\centering
\begin{tabular}{|c||ccccc|}
\hline 
 & $h$ & Vertices & Edges & Faces & DoF \\ 
\hline 
\hline 
Mesh 1 & $1/4$ & 369 & 2,096 & 3,264 & 5,729 \\ 
\hline 
Mesh 2 & $1/8$ & 2,465 & 15,520 & 25,344 & 43,329 \\ 
\hline 
Mesh 3 & $1/16$ & 17,985 & 119,360 & 199,680 & 337,025 \\ 
\hline 
Mesh 4 & $1/32$ & 137,345  & 936,064 & 1,585,152 & 2,658,561 \\ 
\hline 
Mesh 5 & $1/64$ & 1,073,409  & 7,414,016 & 12,632,064 & 21,119,489 \\ 
\hline 
\end{tabular} 
\caption{Geometric information for the Delaunay meshes.} \label{table:mesh}
\end{table}

To demonstrate the equivalence of the two discretizations, MFD and FEM are implemented and solved with $\mathcal{X}_{\mathcal{LSU}}$-preconditioned FGMRES with restart after 100 iterations to a relative residual tolerance of $10^{-8}$. We expect FE convergence rates with respect to mesh size for both. More precisely, with $L^2$ norm in space and $L^{\infty}$ norm in time, $\bm{E}$ and $\bm{B}$ modeled with lowest order N\'ed\'elec and Raviart-Thomas elements, respectively, are expected to converge with $\mathcal{O} (h + \tau^2)$, where $h$ is the mesh partitioning and $\tau$ is the time-step size. Results for $p$ are excluded as it is just an auxiliary variable with no physical relevance to the problem. By choosing a small time step and few time iterations ($\tau = .0125$ for $8$ time steps, i.e., final time $t=0.1$), the spatial error dominates.  This is confirmed in Figure~\ref{fig:error}, where the spatial convergence of the MFD scheme is identical to the FEM one.

\begin{figure}[h!]
	\centering
\includegraphics[scale=.22]{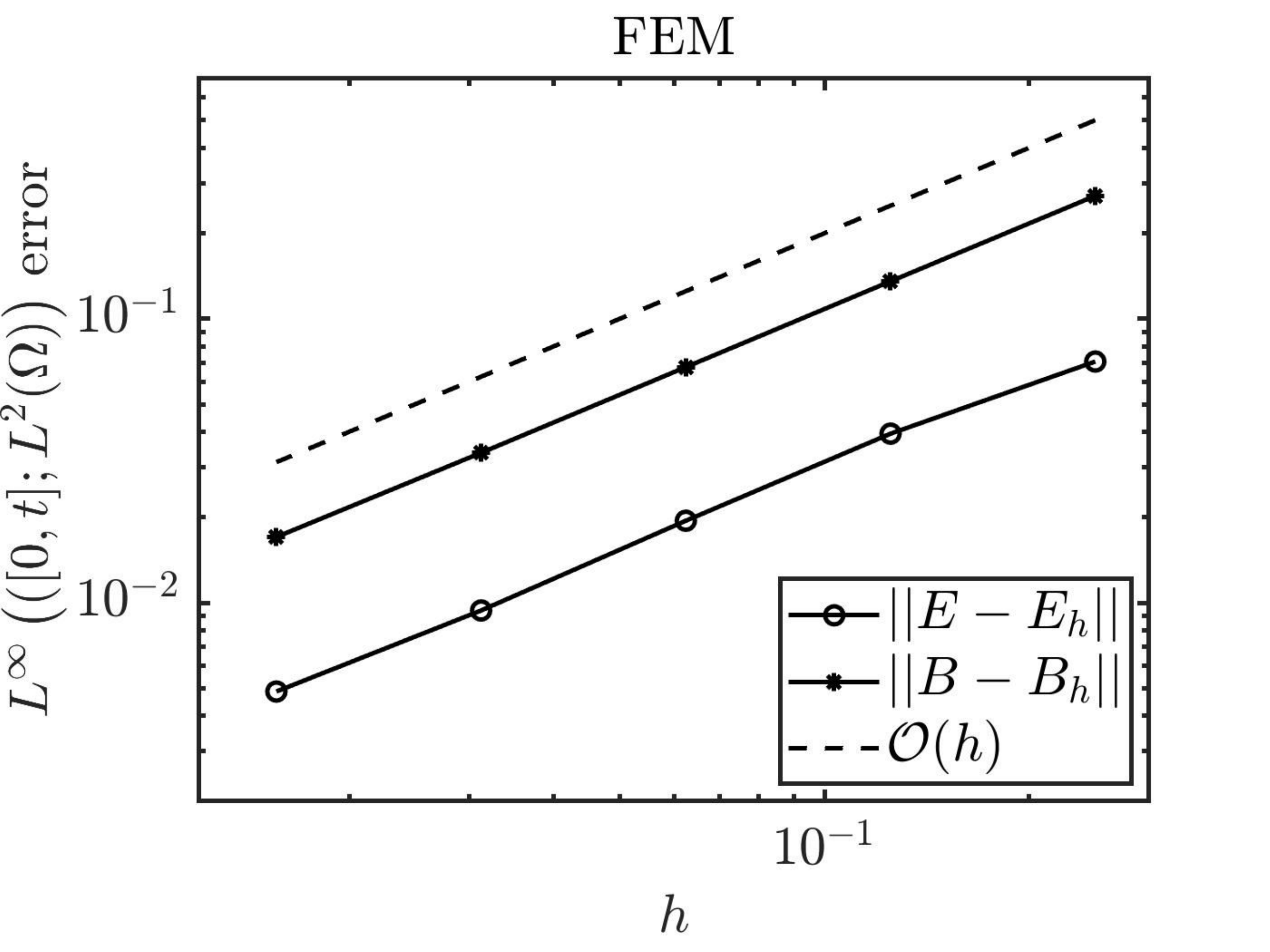}  \quad 
\includegraphics[scale=.22]{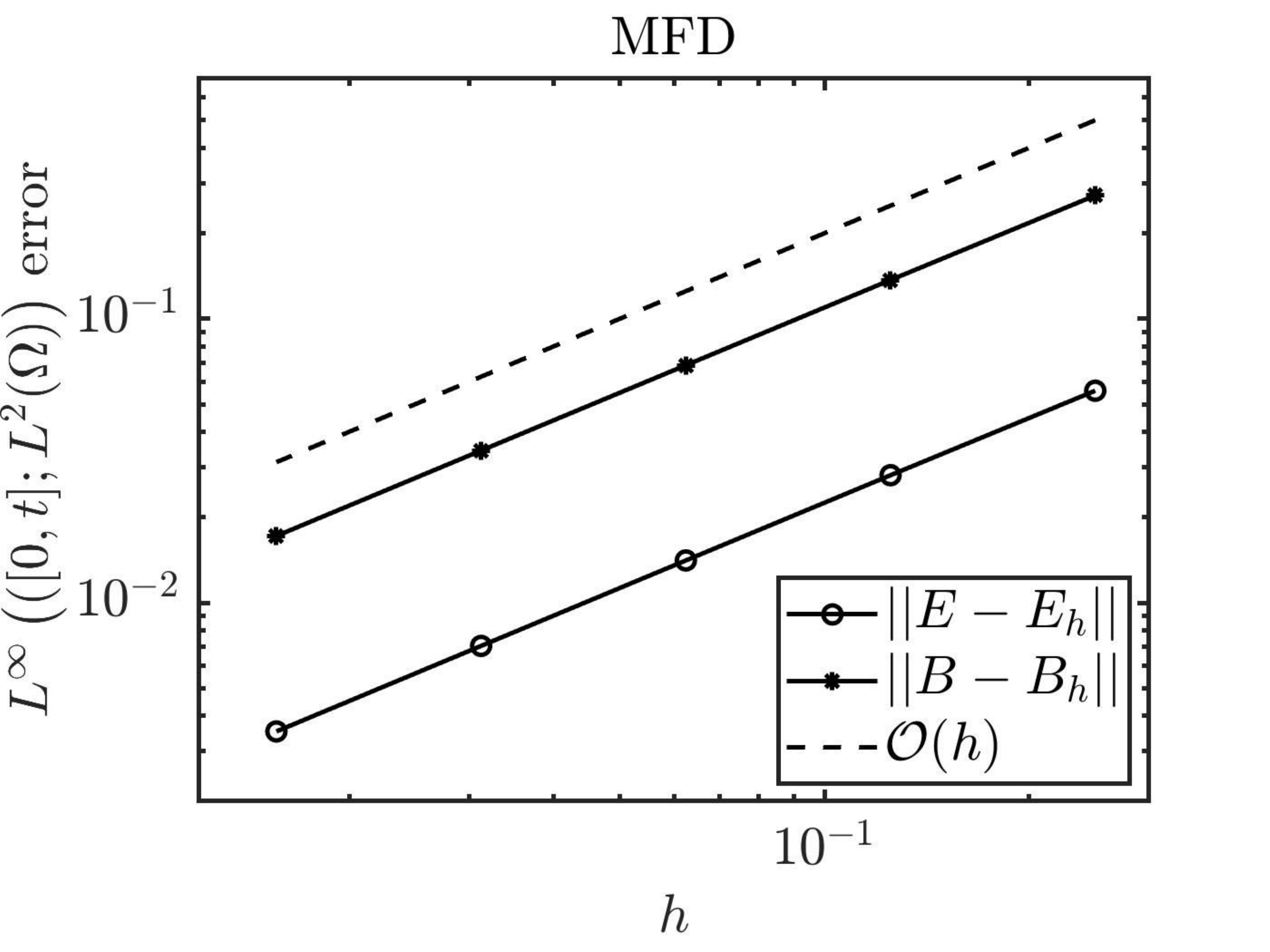} 
\caption{$L^{\infty} \left ([0,t]; L^2(\Omega) \right)$ error for FEM (left) and MFD (right) for $\tau = 0.0125$ after 8 time steps ($t = 0.1$) solving with $\mathcal{X}_{\mathcal{LSU}}$ preconditioner and FGMRES.}
\label{fig:error}
\end{figure}

Next, performance comparisons are made between the MFD and FE methods using the preconditioners developed in Section \ref{sec:precond} for MFD and  in \cite{jxsolver} for FEM. Again using test problem (\ref{eq:testE})--(\ref{eq:testj}) and meshes given by Table \ref{table:mesh}, we test the preconditioners defined in (\ref{eq:precond}) for robustness with respect to time step, $\tau$, and mesh size, $h$.  The diagonal blocks of the preconditioners are solved inexactly by preconditioned GMRES with a relative residual reduction set at $10^{-2}$ for the outer iteration. Flexible GMRES is used as the outer iteration with a relative residual stopping criteria of $10^{-8}$, with restart after 100 iterations. Iteration counts for both MFD and FEM with preconditioners $\mathcal{X}_{\mathcal{LS}}$, $\mathcal{X}_{\mathcal{SU}}$, and $\mathcal{X}_{\mathcal{LSU}}$ are reported in Tables \ref{table:MFD} and \ref{table:FEM}. In all experiments, iteration counts are averaged over the number of timesteps needed to reach $t=1$. The spectral equivalent approximations used in $\mathcal{Q}$ are an HX preconditioner~\cite{HiptmairXu2007b,KolevVassilevski2009} for $\mathcal{S}_{\bm{E}}$, and a standard algebraic multigrid method for $\mathcal{S}_p$.

The results show little variation in iteration count with varying parameters, indicating that the block preconditioners based on exact block factorization are effective and robust with respect to mesh size, $h$, and time step size, $\tau$, for both MFD and FEM as expected.  Furthermore, MFD has comparable iteration counts to FE.

\begin{table}[H]
\begin{center}
\begin{tabular}{|>{\raggedright\arraybackslash}p{1cm}|fffff||fffff||fffff|}

\hline
& \multicolumn{5}{c@{\hskip\secpad}||}{$\mathcal{X}_{\mathcal{LS}}$}
& \multicolumn{5}{c@{\hskip\secpad}||}{$\mathcal{X}_{\mathcal{SU}}$}
& \multicolumn{5}{c@{\hskip\secpad}|}{$\mathcal{X}_{\mathcal{LSU}}$} \\ \hline
\diagbox[innerwidth=1cm,innerleftsep=.25cm,innerrightsep=.25cm]{$\tau$}{$h$}
& $\frac14$ & $\frac18$ & $\frac1{16}$ & $\frac1{32}$ & $\frac1{64}$
& $\frac14$ & $\frac18$ & $\frac1{16}$ & $\frac1{32}$ & $\frac1{64}$
& $\frac14$ & $\frac18$ & $\frac1{16}$ & $\frac1{32}$ & $\frac1{64}$ \\ \hline
\multicolumn{16}{c}{} \\[-1.25em] \hline
0.2     & 5 & 5 & 5 & 6 & 8 & 5 & 5 & 5 & 6 & 6 & 4 & 3 & 4 & 4 & 4 \\ \hline
0.1     & 3 & 4 & 5 & 5 & 6 & 4 & 5 & 5 & 5 & 6 & 2 & 3 & 3 & 4 & 4 \\ \hline
0.05    & 4 & 3 & 4 & 4 & 5 & 4 & 4 & 5 & 5 & 5 & 3 & 2 & 3 & 3 & 3 \\ \hline
0.025   & 3 & 4 & 3 & 4 & 4 & 3 & 4 & 4 & 5 & 5 & 2 & 3 & 2 & 3 & 3 \\ \hline
0.0125  & 2 & 3 & 4 & 3 & 3 & 3 & 3 & 4 & 4 & 4 & 2 & 2 & 3 & 2 & 3 \\ \hline
\end{tabular}
\caption{Iteration counts for the block preconditioners based on block factorization for MFD. Left: block lower triangular, $\mathcal{X}_{\mathcal{LS}}$; Center: block upper triangular, $\mathcal{X}_{\mathcal{SU}}$; Right: symmetric, $\mathcal{X}_{\mathcal{LSU}}$. } \label{table:MFD}
\end{center}
\end{table}

\begin{table}[H]
\begin{center}
\begin{tabular}{|>{\raggedright\arraybackslash}p{1cm}|fffff||fffff||fffff|}
\hline
& \multicolumn{5}{c@{\hskip\secpad}||}{$\mathcal{X}_{\mathcal{LS}}$}
& \multicolumn{5}{c@{\hskip\secpad}||}{$\mathcal{X}_{\mathcal{SU}}$}
& \multicolumn{5}{c@{\hskip\secpad}|}{$\mathcal{X}_{\mathcal{LSU}}$} \\ \hline
\diagbox[innerwidth=1cm,innerleftsep=.25cm,innerrightsep=.25cm]{$\tau$}{$h$}
& $\frac14$ & $\frac18$ & $\frac1{16}$ & $\frac1{32}$ & $\frac1{64}$
& $\frac14$ & $\frac18$ & $\frac1{16}$ & $\frac1{32}$ & $\frac1{64}$
& $\frac14$ & $\frac18$ & $\frac1{16}$ & $\frac1{32}$ & $\frac1{64}$ \\ \hline
\multicolumn{16}{c}{} \\[-1.25em] \hline
0.2    & 4 & 4 & 4 & 4 & 4 & 5 & 5 & 5 & 5 & 4 & 3 & 3 & 3 & 3 & 3 \\ \hline
0.1    & 5 & 4 & 4 & 4 & 4 & 5 & 5 & 5 & 5 & 4 & 3 & 3 & 3 & 3 & 3 \\ \hline
0.05   & 4 & 4 & 4 & 4 & 3 & 5 & 4 & 4 & 4 & 4 & 3 & 2 & 3 & 3 & 3 \\ \hline
0.025  & 3 & 3 & 3 & 3 & 3 & 4 & 4 & 3 & 3 & 2 & 3 & 2 & 2 & 2 & 2 \\ \hline
0.0125 & 3 & 3 & 3 & 3 & 3 & 3 & 3 & 4 & 3 & 3 & 3 & 2 & 2 & 2 & 3 \\ \hline
\end{tabular}
\caption{Iteration counts for the block preconditioners based on block factorization for FEM. Left: block lower triangular, $\mathcal{X}_{\mathcal{LS}}$; Center: block upper triangular, $\mathcal{X}_{\mathcal{SU}}$; Right: symmetric, $\mathcal{X}_{\mathcal{LSU}}$.} \label{table:FEM}
\end{center}
\end{table}

To further compare the two methods, CPU solve time per time iteration is examined for fixed time-step size, as well as the time scaling of the solve time per time step for the three preconditioned FGMRES solvers, both with respect to mesh refinement. In Table \ref{table:time1}, the average solve time over ten iterations with fixed $\tau$ is reported. We see that as the number of degrees of freedom increases, the FE method beats MFD, especially for $\mathcal{X}_{\mathcal{LS}}$ and $\mathcal{X}_{\mathcal{SU}}$. This is further demonstrated in Figure \ref{fig:barplot}, where a side-by-side comparison of the three preconditioners for MFD and FEM is given. This time disparity is likely given by the fact that MFD takes slightly more GMRES iterations per solve, which could be due to the fact that the MFD system is not symmetric while FEM is.

\begin{table}[H]
\begin{center}
\begin{tabular}{|m{0.32in}|>{\centering\arraybackslash}p{0.33in}|>{\centering\arraybackslash}p{0.65in}>{\centering\arraybackslash}p{0.65in}>{\centering\arraybackslash}p{0.65in}>{\centering\arraybackslash}p{0.65in}>{\centering\arraybackslash}p{0.65in}|}
\hline
& $h$
& $\frac14$ & $\frac18$ & $\frac1{16}$ & $\frac1{32}$ & $\frac1{64}$ \\[0.2em] \hline
\multirow{3}{*}{MFD} & $\mathcal{X}_{\mathcal{LS}}$  & $1.32\times 10^{-2}$  & $1.06 \times 10^{-1}$ & $9.95 \times 10^{-1}$ & $1.31\times10^1$ & $1.68\times10^2$ \\ 
					& $\mathcal{X}_{\mathcal{SU}}$  & $1.48 \times 10^{-2}$ & $1.12 \times 10^{-1}$ & $1.10 \times 10^0$ & $1.41 \times10^1$ & $1.96 \times10^2$ \\
					& $\mathcal{X}_{\mathcal{LSU}}$ & $1.06 \times 10^{-2}$ & $8.32 \times 10^{-2}$ & $8.15 \times 10^{-1}$ & $1.12 \times 10^1$ & $1.41 \times 10^2$ \\
\hline \hline
\multirow{3}{*}{FEM} & $\mathcal{X}_{\mathcal{LS}}$  & $2.05 \times 10^{-2}$ & $1.08 \times 10^{-1}$ & $8.84 \times 10^{-1}$ & $9.62 \times 10^0$ & $1.09\times 10^2$ \\ 
					& $\mathcal{X}_{\mathcal{SU}}$  & $2.45 \times 10^{-2}$  & $1.24 \times 10^{-1}$ & $1.02 \times 10^0$ & $1.19 \times 10^1$ & $1.17 \times 10^2$ \\
					& $\mathcal{X}_{\mathcal{LSU}}$ & $1.79 \times 10^{-2}$ & $9.17 \times 10^{-2}$ & $7.57\times 10^{-1}$ & $8.04 \times 10^0$ & $1.06 \times 10^2$ \\
\hline
\end{tabular}
\caption{Average CPU solve time per time iteration over ten time steps with $\tau = 0.1$ for MFD and FEM with all preconditioners. } \label{table:time1}
\end{center}
\end{table}

\begin{figure}[H]
	\centering
\includegraphics[scale=.3]{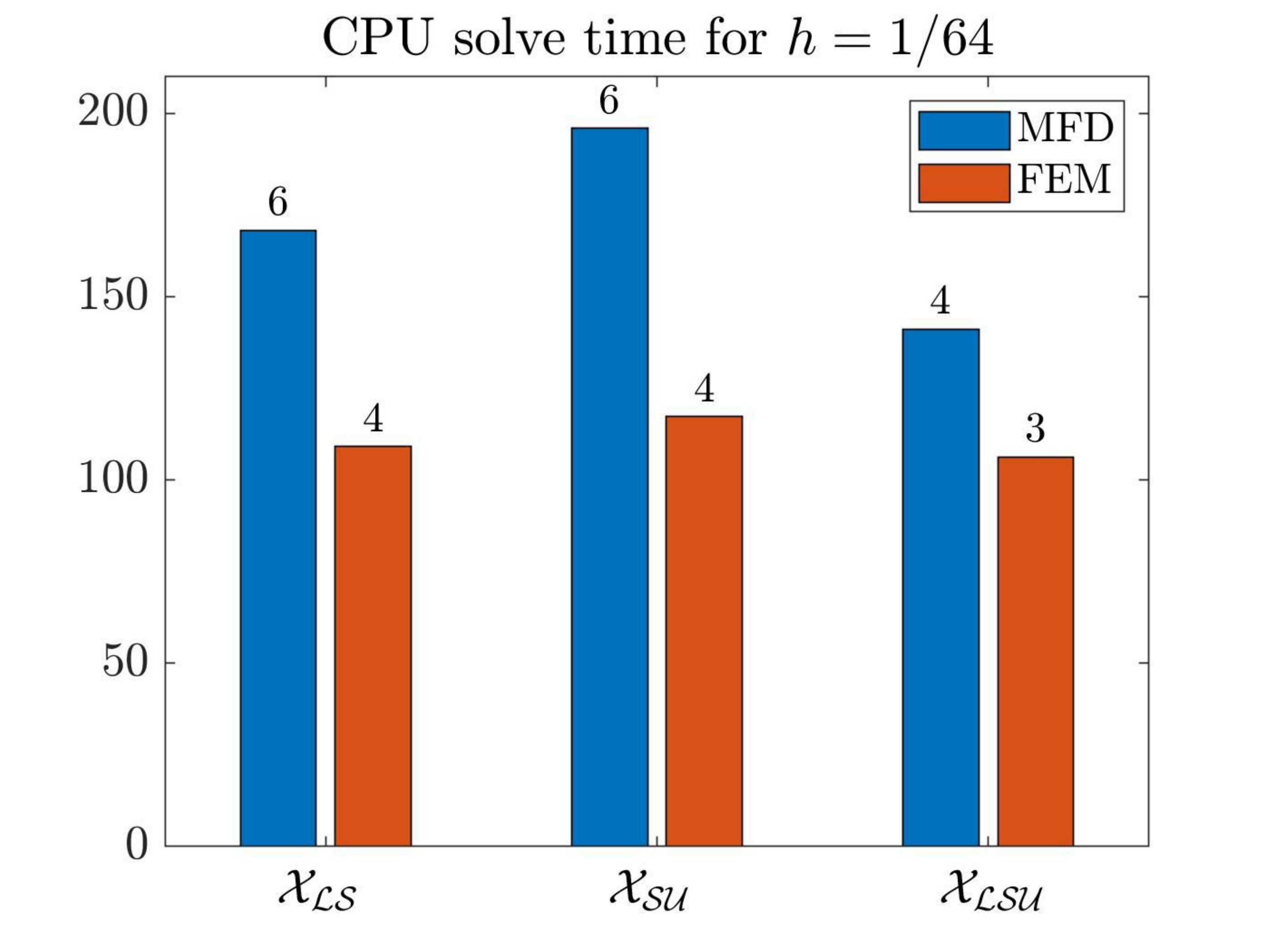} 
\caption{Comparison of solve time averaged over ten time steps of $\tau = 0.1$ for MFD and FEM for fixed mesh size $h = \frac{1}{64}$. The number of FGMRES iterations is given above each bar.}
\label{fig:barplot}
\end{figure}

Finally, it is expected that the solve time scales $\mathcal{O}\left( N \log(N) \right)$, where $N$ is the number of degrees of freedom. Figure \ref{fig:timescale} verifies this, where both FE and MFD results follow the trend of the reference line for all three preconditioners. 

Based on the results presented, we conclude that all three preconditioners are robust and effective for solving the MFD system, and give comparable results to the FE method. In terms of both solve time and iteration counts, $\mathcal{X}_{\mathcal{LSU}}$ is the best preconditioner for both methods, which was also concluded in \cite{jxsolver} for the FE method.

\begin{figure}
	\centering
\includegraphics[scale=.25]{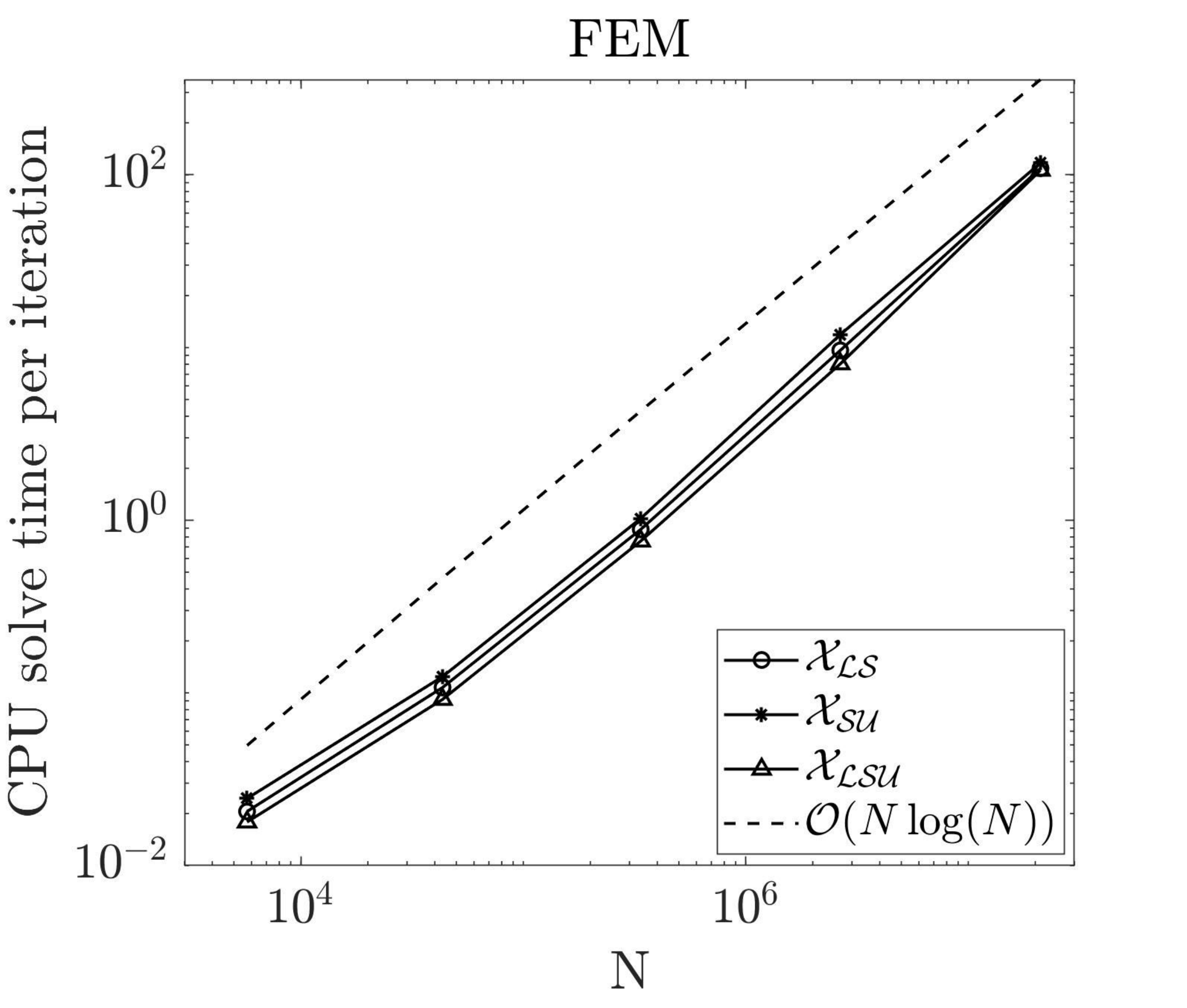}  \quad 
\includegraphics[scale=.25]{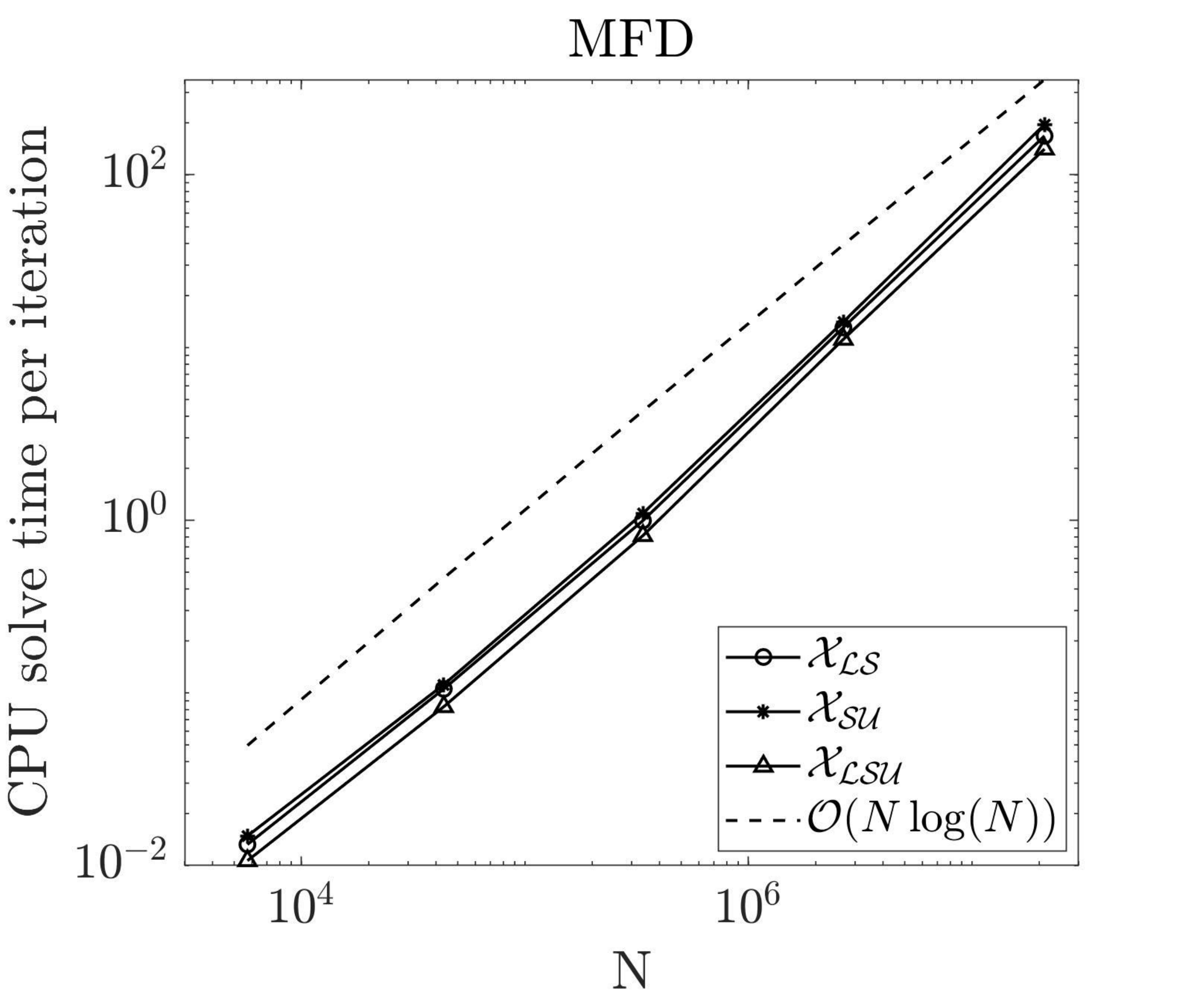} 
\caption{Time scaling of solve time averaged over ten time steps of $\tau = 0.1$ for FEM(left) and MFD(right), where $N$ is the number of degrees of freedom. }
\label{fig:timescale}
\end{figure}

\section{Conclusions}\label{sec:conc}
By examining MFD in a FE framework, we are able to exploit the FEM tools and theory to strengthen the MFD method for Maxwell. From this equivalence of the two methods, well-posedness of the MFD system is proven with Babuska-Brezzi theory. Numerical results demonstrate that the FE convergence theory is recoverable in the MFD implementation. Furthermore, in showing well-posedness of the Maxwell MFD discretization and the connection to the FEM, robust block preconditioners developed for the FEM in~\cite{jxsolver} are adopted for solving the MFD linear system efficiently. When using structure-preserving discretizations, the goal is to enforce the PDE constraints, particularly a divergence-free magnetic field for Maxwell's equations, at all time steps and all solve iterations. The block preconditioners for GMRES developed here are shown to be robust and guarantee all properties of the discretization at each time step.

All of the results in this paper apply to lowest--order FE and MFD methods. While higher--order MFD methods exist \cite{Castillo_high_MFD, Lipkinov_high_MFD, MFDart3}, connections to higher--order FEM are unclear and require further investigation. If such relationships are found, the analysis presented here should be valid. Additionally, a priori error estimates for MFD for Maxwell could be derived in a fashion similar to the techniques used for Darcy flow in \cite{MLBrezzi}. 
Furthermore, now that a FE framework for MFD has been developed for the full Maxwell system, other electromagnetic applications can be explored such as MHD, time-harmonic Maxwell, or $\bm{H}(\operatorname{curl})$ and $\bm{H}(\operatorname{div})$ problems, in general. The structure-preserving nature of the MFD discretization opens the door for many more physical applications with PDE constraints to be explored.

\section*{Acknowledgments}
The work of JHA and XH was partially funded by National Science Foundation grant DMS-1620063. The work of LTZ was supported in part by NSF DMS-1720114 and DMS-1819157.

\bibliographystyle{siam}
\bibliography{references}
\end{document}